\documentclass[11pt,a4paper,reqno]{amsart}
\usepackage{datetime}
\usepackage{amsmath}
\usepackage{amssymb,latexsym}
\usepackage[dvips]{graphicx}
\usepackage{color}
\usepackage{cite}
\usepackage{amsthm}

\usepackage{etoolbox}
\makeatletter
\patchcmd{\@maketitle}
  {\ifx\@empty\@dedicatory}
  {\ifx\@empty\@date \else {\vskip3ex \centering\footnotesize\@date\par\vskip1ex}\fi
   \ifx\@empty\@dedicatory}
  {}{}
\patchcmd{\@adminfootnotes}
  {\ifx\@empty\@date\else \@footnotetext{\@setdate}\fi}
  {}{}{}
\makeatother

\newcommand{\R}{\mathbb R}

\newtheorem{theorem}{Theorem} [section]
\newtheorem{lemma}{Lemma} [section]
\newtheorem{proposition}{Proposition} [section]

\newtheorem{definition}{Definition} [section]

\newtheorem{remark}{Remark}[section]

\let\ssection=\section\renewcommand{\section}{\setcounter{equation}{0}\ssection}

\begin{document}
\address{Mohamad Darwich: Faculty of Sciences, Laboratory of Mathematics, Doctoral School of Sciences and Technology, Lebanese University Hadat, Lebanon.}
\title[]{Global existence for the nonlinear fractional Schr\"odinger equation
with fractional dissipation.}
\date{\today}
\author{Mohamad Darwich.}

\keywords{Damped Fractional Nonlinear Schr\"odinger Equation, Global existence.}
\begin{abstract}
We consider the initial value problem for the fractional nonlinear
Schr\"odinger equation with a fractional dissipation. Global existence and scattering are proved depending
on the order of the fractional dissipation.
\end{abstract}

\maketitle

\section{Introduction}
Consider the Cauchy problem for the damped fractional nonlinear Schr\"odinger equation
\begin{equation}
\left\{
\begin{array}{l}
iu_{t} -(-\Delta)^{\alpha}u +|u|^{p-1}u + ia(-\Delta)^s u =0,  (t,x) \in [0,\infty[\times \mathbb{R}^{d}. \\
u(0)= u_{0} 
\end{array}
\right. \label{NLSasalpha}
\end{equation}
  \hspace{0.3cm}where $a > 0$ is the coefficient of friction, $d\geq 2$, $\alpha \in (\frac{d}{2d-1},1)$, $ s>0$ with $L^2$-critical nonlinearity i.e $p=1+\frac{4\alpha}{d}$.\\
  
  In the classical case ($\alpha =1$ and $a=0$) equation (\ref{NLSasalpha}) arises in various areas of nonlinear optics, plasma physics and fluid mechanics to describe propagation phenomena in dispersive media.\\
  \\
 When $a=0$ and $0< \alpha < 1$ equation \eqref{NLSasalpha} (called FNLS : Fractional NLS) can be seen as a canonical model for
a nonlocal dispersive PDE with focusing nonlinearity that can exhibit solitary waves,
turbulence phenomena  which has been studied by many authors \cite{Alexandru}, \cite{David}, \cite{Hichem}, \cite{Cho}, \cite{sparber1}, \cite{krieger} and \cite{Weinstein1} in 
mathematics, numerics, and physics.
The FNLS equation is a fundamental equation of fractional
quantum mechanics, which was derived by Laskin \cite{Laskin1}, \cite{Laskin2} as a result of extending
the Feynman path integral, from the Brownian-like to L´evy-like quantum mechanical
paths. The Cauchy problem for FNLS was studied in \cite{Guo1} and \cite{Guo2} and proved that it is well-posed and scatters in the radial energy space and in \cite{Guo3} the author proves that the equation is globaly well posed  for small data.\\





In this this paper we complete the $L^2$-critical FNLS equation with a  fractional laplacian of order $ 2s $, $ s>0 $. The fractional laplacian is commonly used to model  fractal (anomalous) diffusion related to the L\'evy flights (see e.g. Stroock \cite{Stroock},  Bardos and all \cite{Bardos}, Hanyga \cite{Hanyga}). It also appears in the physical literature to model attenuation phenomena of acoustic waves in irregular porous random media
 (cf. Blackstock \cite{Black}, Gaul \cite{Gaul}, Chen-Holm \cite{Chen}).
 
 Note that for $s=0$, the global existence for \eqref{NLSasalpha} was proved in \cite{Saanouni} for a large daming tem ( i.e for large $a > 0$), in this paper we will obtain the global existence result for any damping term $a>0.$

Finally, the case $\alpha =1$ and a nonlinear damping of the type  $ia|u|^pu$,  has been studied by Antonelli-Sparber and Antonelli-Carles-Sparber (cf. \cite{Sparber} and \cite{Sparber2}). In this case the  origin of the nonlinear damping term  is multiphoton absorption. \vspace*{4mm}

The purpose of this paper is to prove some global well-posedness
and scattering  results for \eqref{NLSasalpha} in the radial case and the rest of the paper is organized as follows. Section 2 is devoted to prove the local existence results.  In section 3 we will show the main results i.e the global well-posedness of  equation \eqref{NLSasalpha} and the scattering. \\

Now let us define the following quantities:\\
$L^2$-norm : $m(u)=\left\|u\right\|_{L^2} = \Bigl( \displaystyle{\int |u(x) |^2dx}\Bigr)^{1/2}$.\\
Energy : $E(u) = \displaystyle{\frac{1}{2}\|(-\Delta)^{\frac{\alpha}{2}}u\|_{L^2}^{2} - \frac{d}{4\alpha + 2d}\|u\|_{L^{\frac{4\alpha}{d}+2}}^{{\frac{4\alpha}{d}+2}}}. $\\
However, it is easy to prove that if $u$ is a  smooth solution of (\ref{NLSasalpha}) on $[0,T[ $, then for all $ t\in [0,T[ $ it holds
\begin{equation}\label{masse}
\frac{d}{dt}\big( m(u(t))\big)= - a\|(-\Delta)^{\frac{s}{2}}u\|^2_{L^2};
\end{equation}
\begin{equation}\label{energie}
\frac{d}{dt} \big(E(u(t))\big) =  \displaystyle{-a\int |(-\Delta)^{\frac{s+\alpha}{2}}u(t)|^2 +a \Im \int ((-\Delta)^s u(t)) |u(t)|^{p-1}\overline{u}(t)}.
\end{equation}

 
Let us now state our results:

\begin{theorem}\label{theorem1}
Let $d\geq 2 $, $\alpha \in (\frac{d}{2d-1},1)$ and  $0 < s < \alpha$, such that $ s+\alpha \geq 1$ then there exists a real number $\beta > 0$  such that for any initial datum $u_0 \in H_{rd}^\alpha(\R^d) $ with $\|u_0\|_{L^2} <\beta $, the emanating solution u is global in $ H_{rd}^\alpha(\R^d)$.
\end{theorem}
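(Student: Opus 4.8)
The plan is to prove global existence by establishing a uniform-in-time bound on the $H^\alpha_{rd}$-norm of the solution, which together with the local existence theory from Section~2 rules out finite-time blow-up. Since $p = 1 + \frac{4\alpha}{d}$ is the $L^2$-critical nonlinearity, the natural strategy is to control the energy $E(u(t))$ and exploit the fact that the $L^2$-norm is nonincreasing by \eqref{masse}. First I would use the sharp fractional Gagliardo--Nirenberg inequality for the $L^{\frac{4\alpha}{d}+2}$-norm, namely $\|u\|_{L^{\frac{4\alpha}{d}+2}}^{\frac{4\alpha}{d}+2} \lesssim \|(-\Delta)^{\frac{\alpha}{2}}u\|_{L^2}^{2}\,\|u\|_{L^2}^{\frac{4\alpha}{d}}$, which is the correct scaling-critical estimate in the radial setting. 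Substituting this into the definition of $E(u)$ shows that when $\|u_0\|_{L^2} < \beta$ with $\beta$ chosen small enough that the best constant times $\beta^{\frac{4\alpha}{d}}$ is strictly less than one, the kinetic term dominates and one obtains a coercivity estimate of the form $\|(-\Delta)^{\frac{\alpha}{2}}u(t)\|_{L^2}^2 \lesssim E(u(t))$.

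The central difficulty is that, unlike the conservative case $a=0$, the energy is \emph{not} conserved here: the evolution law \eqref{energie} contains the two dissipative contributions $-a\int |(-\Delta)^{\frac{s+\alpha}{2}}u|^2$ and the troublesome cross term $a\,\Im\int ((-\Delta)^s u)\,|u|^{p-1}\overline{u}$. The first term is favorable (it has a good sign), but the second term has no definite sign and must be absorbed. This is where the hypotheses $s < \alpha$ and $s+\alpha \geq 1$ enter, and estimating this cross term is the main obstacle. The plan is to bound it by interpolating: write $\|(-\Delta)^s u\|$ and the nonlinear factor $\||u|^{p-1}u\|$ in terms of the quantities $\|(-\Delta)^{\frac{s+\alpha}{2}}u\|_{L^2}$ (the good dissipation) and $\|(-\Delta)^{\frac{\alpha}{2}}u\|_{L^2}$, using fractional Leibniz/Kato--Ponce estimates together with Sobolev embedding valid for radial functions. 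The condition $s+\alpha\geq 1$ should guarantee enough smoothing from the dissipation to control the fractional derivative hitting the nonlinearity, while $s<\alpha$ keeps the order of the dissipation subcritical relative to the dispersion.

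Concretely, I would aim to show that the cross term can be estimated as
\begin{equation*}
a\Bigl|\Im\int ((-\Delta)^s u)\,|u|^{p-1}\overline{u}\Bigr|
\leq \frac{a}{2}\|(-\Delta)^{\frac{s+\alpha}{2}}u\|_{L^2}^2
+ C\,a\,\|(-\Delta)^{\frac{\alpha}{2}}u\|_{L^2}^{2}\,\|u\|_{L^2}^{\theta},
\end{equation*}
so that half of the good dissipative term absorbs the bad contribution and the remainder is controlled by the already-coercive energy times a small power of the conserved-or-decaying mass. Feeding this back into \eqref{energie} yields a differential inequality of the form $\frac{d}{dt}E(u(t)) \leq C\,a\,\|u_0\|_{L^2}^{\theta}\,E(u(t))$, to which Gronwall's lemma applies and gives $E(u(t)) \leq E(u_0)\,e^{C a \|u_0\|_{L^2}^{\theta} t}$ on any finite interval.

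Finally, I would combine this with the coercivity estimate to close the argument: on any finite time interval $[0,T]$ the quantity $\|(-\Delta)^{\frac{\alpha}{2}}u(t)\|_{L^2}^2$ stays bounded, and since $\|u(t)\|_{L^2} \leq \|u_0\|_{L^2}$ by \eqref{masse}, the full $H^\alpha_{rd}$-norm remains finite for all $t$. By the blow-up alternative from the local theory, the solution therefore cannot cease to exist in finite time, which gives global existence. The smallness threshold $\beta$ is exactly the one forcing the Gagliardo--Nirenberg constant to leave a positive fraction of the kinetic energy available, and the radial assumption is used both in the Sobolev embeddings and implicitly to stay within the local well-posedness framework established earlier.
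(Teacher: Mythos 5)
Your overall strategy is the paper's: coercivity of $E$ under a small-mass assumption via the sharp fractional Gagliardo--Nirenberg inequality (this is Proposition \ref{propenergie}), then control of $\frac{d}{dt}E(u(t))$ from \eqref{energie} by absorbing the indefinite cross term $a\Im\int((-\Delta)^su)\,|u|^{p-1}\overline{u}$ into the dissipative term $-a\|(-\Delta)^{\frac{s+\alpha}{2}}u\|_{L^2}^2$, and finally the continuation criterion together with the mass decay \eqref{masse}. You also correctly identified that the hypotheses $s<\alpha$ and $s+\alpha\ge 1$ enter through the interpolation inequalities applied to this cross term.

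The one step that would not go through as written is your displayed absorption estimate. The natural interpolations, namely
\[
\|(-\Delta)^s u\|_{L^2}\le\|(-\Delta)^{\frac{s+\alpha}{2}}u\|_{L^2}^{\frac{2s}{s+\alpha}}\|u\|_{L^2}^{\frac{\alpha-s}{\alpha+s}}\qquad(\text{uses } s\le\alpha),
\]
combined with Gagliardo--Nirenberg through $\|\nabla u\|_{L^2}$ and $\|\nabla u\|_{L^2}\le\|(-\Delta)^{\frac{s+\alpha}{2}}u\|_{L^2}^{\frac{1}{s+\alpha}}\|u\|_{L^2}^{\frac{s+\alpha-1}{s+\alpha}}$ (this is where $s+\alpha\ge 1$ is needed), produce the dissipation norm with total exponent $\frac{2s}{s+\alpha}+\frac{2\alpha}{s+\alpha}=2$ \emph{exactly}, and the accompanying mass power works out to $\frac{4\alpha}{d}$. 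Hence the cross term is bounded by $Ca\,\|u\|_{L^2}^{4\alpha/d}\,\|(-\Delta)^{\frac{s+\alpha}{2}}u\|_{L^2}^2$: there is no genuine remainder of the form $Ca\|(-\Delta)^{\frac{\alpha}{2}}u\|_{L^2}^2\|u\|_{L^2}^{\theta}$ to feed into Gronwall, and if you force a Young splitting to manufacture one, you end up with a power of $\|(-\Delta)^{\frac{\alpha}{2}}u\|_{L^2}$ strictly larger than $2$, i.e.\ a superlinear power of $E$, for which plain Gronwall does not close. The correct (and simpler) conclusion, which is what the paper proves, is that for $\|u_0\|_{L^2}$ below an explicit threshold one has $\frac{d}{dt}E(u(t))\le a\|(-\Delta)^{\frac{s+\alpha}{2}}u\|_{L^2}^2\bigl(C\|u\|_{L^2}^{4\alpha/d}-1\bigr)\le 0$, so $E$ is nonincreasing; together with Proposition \ref{propenergie} this yields a uniform-in-time bound on $\|(-\Delta)^{\frac{\alpha}{2}}u(t)\|_{L^2}$ rather than the exponentially growing one your Gronwall step would give. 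Your endgame (uniform $H^{\alpha}$ bound plus the blow-up alternative) is then the same as the paper's.
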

\begin{theorem}\label{theorem2}
Let $d \geq 2$,  $\alpha \in (\frac{d}{2d-1},1)$ and $s \geq \alpha$. Then the Cauchy problem (\ref{NLSasalpha}) is globally well-posed in $ H_{rd}^\alpha(\R^d) $.
\end{theorem}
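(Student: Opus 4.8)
The plan is to deduce Theorem~\ref{theorem2} from the local theory of Section~2 together with an a priori bound on $\|u(t)\|_{H_{rd}^\alpha}$ that holds on every finite time interval and, crucially, requires no smallness of the datum; the entire improvement over Theorem~\ref{theorem1} comes from the fact that for $s\ge\alpha$ the friction dissipates at the regularity level $s+\alpha\ge 2\alpha$, strictly \emph{above} the energy level $\alpha$. First I would record the $L^2$ control: by \eqref{masse} the map $t\mapsto m(u(t))$ is nonincreasing (indeed strictly decreasing), so
\[
\|u(t)\|_{L^2}\le \|u_0\|_{L^2}\qquad\text{for all }t\text{ in the interval of existence.}
\]
The local result furnishes a maximal time $T^\ast$ and the blow-up alternative: if $T^\ast<\infty$ then $\|u(t)\|_{H_{rd}^\alpha}\to\infty$ as $t\uparrow T^\ast$. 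Since the low-frequency part of the $H^\alpha$ norm is already controlled by the displayed bound, it suffices to bound $\|(-\Delta)^{\alpha/2}u(t)\|_{L^2}$ on $[0,T^\ast)$.

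Next, by the same computation that produces \eqref{energie}, pairing \eqref{NLSasalpha} with $(-\Delta)^\alpha u$ gives
\[
\frac12\frac{d}{dt}\|(-\Delta)^{\alpha/2}u\|_{L^2}^2 = -a\,\|(-\Delta)^{\frac{s+\alpha}{2}}u\|_{L^2}^2 + \Im\!\int (-\Delta)^\alpha u\,|u|^{p-1}\overline{u}.
\]
The dissipative term now controls $\dot{H}^{s+\alpha}$ with $s+\alpha\ge 2\alpha>\alpha$. To treat the nonlinear term I would move half of the derivatives onto the nonlinearity,
\[
\Big|\Im\!\int (-\Delta)^\alpha u\,|u|^{p-1}\overline{u}\Big| = \Big|\big\langle (-\Delta)^{\frac{s+\alpha}{2}}u,\,(-\Delta)^{\frac{\alpha-s}{2}}(|u|^{p-1}u)\big\rangle\Big| \le \|(-\Delta)^{\frac{s+\alpha}{2}}u\|_{L^2}\,\big\||u|^{p-1}u\big\|_{\dot{H}^{\alpha-s}},
\]
where $\alpha-s\le 0$, so the nonlinearity is measured in a space of nonpositive order. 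A fractional Leibniz/Gagliardo--Nirenberg estimate adapted to the rough nonlinearity $|u|^{4\alpha/d}u$ then bounds $\big\||u|^{p-1}u\big\|_{\dot{H}^{\alpha-s}}$ by $\|(-\Delta)^{\frac{s+\alpha}{2}}u\|_{L^2}^{\theta}\,\|(-\Delta)^{\alpha/2}u\|_{L^2}^{\eta}\,\|u\|_{L^2}^{\,p-\theta-\eta}$, the exponents being constrained by the $L^2$-critical scaling $p-1=\frac{4\alpha}{d}$; the regularity surplus $s+\alpha>\alpha$ makes it possible to choose the interpolation with top-order exponent $\theta<1$. This is exactly where $s\ge\alpha$ enters: for $s<\alpha$ the damping acts below the energy level, no surplus is available, and one is forced into the smallness regime of Theorem~\ref{theorem1}.

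Finally, inserting this bound and applying Young's inequality with the conjugate pair $\big(\tfrac{2}{1+\theta},\tfrac{2}{1-\theta}\big)$ absorbs the factor $\|(-\Delta)^{\frac{s+\alpha}{2}}u\|_{L^2}^{1+\theta}$ (note $1+\theta<2$) into the dissipation, leaving, with $X(t)=\|(-\Delta)^{\alpha/2}u(t)\|_{L^2}^2$ and $\|u(t)\|_{L^2}\le\|u_0\|_{L^2}$ absorbed into the constants,
\[
\frac{d}{dt}X \le -\frac a2\,\|(-\Delta)^{\frac{s+\alpha}{2}}u\|_{L^2}^2 + C\big(\|u_0\|_{L^2}\big)\,X^{\rho}.
\]
Interpolating $\dot{H}^\alpha$ between $L^2$ and $\dot{H}^{s+\alpha}$ and using the $L^2$ bound gives $\|(-\Delta)^{\frac{s+\alpha}{2}}u\|_{L^2}^2\ge c\,X^{(s+\alpha)/\alpha}$, so that $X'\le -c'X^{(s+\alpha)/\alpha}+CX^{\rho}$; the scaling count makes the dissipation exponent $(s+\alpha)/\alpha$ dominate the forcing exponent $\rho$ for $s>\alpha$, which produces an absorbing set and hence a bound on $X$ uniform on $[0,T^\ast)$ (for $s\ge 2\alpha$ one has $\rho\le 1$ and even the Gr\"onwall inequality obtained by discarding the good term closes the estimate). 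By the blow-up alternative this forces $T^\ast=\infty$, and together with the $L^2$ bound we obtain global control of $\|u(t)\|_{H_{rd}^\alpha}$. The main obstacle is the nonlinear estimate of the middle paragraph, namely establishing the fractional Leibniz/Gagliardo--Nirenberg inequality for $|u|^{4\alpha/d}u$ with $\theta<1$; the subtlest point is the endpoint $s=\alpha$, where $(s+\alpha)/\alpha$ and $\rho$ coincide at the critical value $2$, so one must either track constants sharply or exploit the \emph{strict} decay of $m(u(t))$ furnished by \eqref{masse}, which eventually drives the mass into the coercive regime of Theorem~\ref{theorem1}.
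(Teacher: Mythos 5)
Your strategy (an $\dot H^\alpha$-level energy inequality, absorbing the nonlinear term into the dissipation $-a\|(-\Delta)^{\frac{s+\alpha}{2}}u\|_{L^2}^2$ via Young, and closing an ODE of the form $X'\le -c'X^{(s+\alpha)/\alpha}+CX^{\rho}$) is genuinely different from the paper's, and by the scaling count you yourself perform it can only close when the dissipation exponent \emph{strictly} dominates, i.e. for $s>\alpha$. At the endpoint $s=\alpha$ — which is included in the hypothesis of Theorem \ref{theorem2} and is precisely the case needed for Theorem \ref{third theorem} — the two exponents coincide at $2$: the Gagliardo--Nirenberg bound gives
\[
\Bigl|\Im\int (-\Delta)^{\alpha}u\,|u|^{p-1}\overline{u}\Bigr|\le \|(-\Delta)^{\alpha}u\|_{L^2}\,\|u\|_{L^{2p}}^{p}\le C\,\|(-\Delta)^{\alpha}u\|_{L^2}^{2}\,\|u\|_{L^2}^{4\alpha/d},
\]
so the right-hand side of the energy inequality is $\|(-\Delta)^{\alpha}u\|_{L^2}^{2}\bigl(C\|u\|_{L^2}^{4\alpha/d}-a\bigr)$ and the sign is favorable only under a smallness condition on the mass relative to $a$ — exactly the regime of Theorem \ref{theorem1}, not the unconditional statement of Theorem \ref{theorem2}. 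Your two proposed escapes do not repair this: tracking the sharp constant yields a mass threshold (the ground-state mass), not arbitrary data; and while $m(u(t))$ is strictly decreasing, \eqref{masse} only gives the integrated bound $\int_0^{T^*}\|(-\Delta)^{s/2}u\|_{L^2}^2\,dt\le \frac{1}{2a}\|u_0\|_{L^2}^2$, which in no way forces the mass to drop below the coercivity threshold before a putative finite blow-up time. So the endpoint case is a genuine gap.

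The paper avoids this entirely by never running an energy estimate for Theorem \ref{theorem2}. Instead it integrates \eqref{masse} to get the \emph{global spacetime} bound $\|(-\Delta)^{s/2}u\|_{L^2_TL^2}\le \frac{1}{\sqrt{2a}}\|u_0\|_{L^2}$ (Lemma \ref{lemma2}), interpolates (using $s\ge\alpha$, so the interpolation exponent $\alpha/s\le 1$ is admissible) against the trivial bounds $\|u\|_{L^2_TL^2}\le T^{1/2}\|u_0\|_{L^2}$ and $\|u\|_{L^\infty_TL^2}\le\|u_0\|_{L^2}$, and lands via Sobolev embedding on a bound for the critical Strichartz norm $\|u\|_{L^{\theta}_TL^{\theta}}$, $\theta=\frac{4\alpha}{d}+2$, that is finite for every finite $T$ (and at $s=\alpha$ uniform in $T$, which is what feeds the scattering in Theorem \ref{third theorem}). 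Combined with the blow-up criterion of Proposition \ref{prop2}, phrased in terms of that critical norm rather than the $H^\alpha$ norm, this gives $T^*=+\infty$ with no smallness whatsoever. If you want to salvage your argument, restrict it to $s>\alpha$ and handle $s=\alpha$ by the paper's interpolation route; as written, the endpoint does not close.
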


\begin{theorem}\label{third theorem}
Let $\alpha \in (\frac{d}{2d-1},1)$, $s = \alpha$,  $u_0 \in H_{rd}^\alpha(\mathbb{R}^{d})$ and $u \in C(\mathbb{R}^{+},H_{rd}^\alpha)$ be the
global solution to (\ref{NLSasalpha}). Then:
\begin{enumerate}
\item There exists $ u_{+} \in L^2$ such that $\|(u - S_{a,\alpha,s}(.)u_{+})(t)\|_{L^2} \longrightarrow 0$, as $t \longrightarrow +\infty$.
\item $\|u\|_{L^{\frac{4\alpha}{d}+2}(\R_+^*\times\R^d) } \longrightarrow 0, ~ \text{when}~ a \longrightarrow +\infty.$
\end{enumerate}
\end{theorem}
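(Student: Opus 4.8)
The proof of both parts is driven by the dissipation identity \eqref{masse}. Integrating it on $[0,t]$ and using that the mass is nonincreasing gives, since $s=\alpha$, the global bound
\[
a\int_0^{\infty}\|(-\Delta)^{\frac{\alpha}{2}}u(\tau)\|_{L^2}^2\,d\tau\le m(u_0).
\]
Thus $u\in L^2_t\dot H^\alpha_x(\R^+)$ with a norm controlled by $(m(u_0)/a)^{1/2}$, while $\|u\|_{L^\infty_tL^2_x}\le\|u_0\|_{L^2}$. Since $\alpha<1\le d/2$, Sobolev embedding gives $\dot H^\alpha(\R^d)\hookrightarrow L^{\frac{2d}{d-2\alpha}}(\R^d)$, so $u\in L^2_tL^{\frac{2d}{d-2\alpha}}_x(\R^+)$; interpolating this with $u\in L^\infty_tL^2_x$ (parameter $\theta=\frac{d}{d+2\alpha}$) places $u$ in $L^{\frac{4\alpha}{d}+2}_{t,x}(\R^+)$. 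This already yields part~(2): the interpolation inequality gives $\|u\|_{L^{\frac{4\alpha}{d}+2}(\R^+\times\R^d)}\lesssim \|u_0\|_{L^2}^{1-\theta}(m(u_0)/a)^{\theta/2}$, which tends to $0$ as $a\to+\infty$. No Strichartz machinery is needed for part~(2).

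For part~(1) I would run the standard scattering argument of the $L^2$-critical theory, the key simplification being that the global space-time bound needed to close it is furnished for free by the dissipation, rather than by a concentration-compactness or induction-on-energy scheme. First I would record the Strichartz estimates for the radial fractional propagator underlying the local theory of Section~2: in the radial class one has $\|e^{-it(-\Delta)^\alpha}f\|_{L^q_tL^r_x}\lesssim\|f\|_{L^2}$ for admissible pairs $\frac{2\alpha}{q}+\frac{d}{r}=\frac{d}{2}$, in particular for $(q,r)=(2,\frac{2d}{d-2\alpha})$ and for $(q,r)=(\frac{4\alpha}{d}+2,\frac{4\alpha}{d}+2)$, together with the dual inhomogeneous estimate. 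Writing Duhamel's formula for $u$ relative to the free group $e^{-it(-\Delta)^\alpha}$ and collecting the dissipative and nonlinear contributions into the single source $G(\tau)=-a(-\Delta)^\alpha u(\tau)+i|u|^{p-1}u(\tau)$, the scattering state is the improper integral $u_+=u_0+\int_0^\infty S_{a,\alpha,s}(-\tau)G(\tau)\,d\tau$, and the scattering identity reads $u(t)-S_{a,\alpha,s}(t)u_+=-\int_t^\infty S_{a,\alpha,s}(t-\tau)G(\tau)\,d\tau$. By the dual estimate for the pair $(2,\frac{2d}{d-2\alpha})$ it suffices to show that $G\in L^2_tL^{\frac{2d}{d+2\alpha}}_x(\R^+)$, with tail norm on $[t,\infty)$ vanishing as $t\to+\infty$.

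The two pieces of $G$ are handled by the same global bounds. For the nonlinear piece, $\||u|^{p-1}u\|_{L^2_tL^{\frac{2d}{d+2\alpha}}_x}=\|u\|_{L^{2p}_tL^{\frac{2pd}{d+2\alpha}}_x}^p$, and the latter space is exactly the one reached by interpolating $L^\infty_tL^2_x$ with $L^2_tL^{\frac{2d}{d-2\alpha}}_x$ (parameter $\theta=\frac{d}{d+4\alpha}$), so this norm is finite by the bounds of the first paragraph. For the dissipative piece, writing $(-\Delta)^\alpha u=(-\Delta)^{\alpha/2}\big[(-\Delta)^{\alpha/2}u\big]$ and using the Sobolev embedding $\dot H^{-\alpha}(\R^d)\hookrightarrow L^{\frac{2d}{d+2\alpha}}(\R^d)$ gives $\|(-\Delta)^\alpha u\|_{L^{\frac{2d}{d+2\alpha}}_x}\lesssim\|(-\Delta)^{\alpha/2}u\|_{L^2}$, whence $\|(-\Delta)^\alpha u\|_{L^2_tL^{\frac{2d}{d+2\alpha}}_x}^2\lesssim\int_0^\infty\|(-\Delta)^{\alpha/2}u\|_{L^2}^2\,d\tau<\infty$. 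Here the identity $s=\alpha$ is essential: it is precisely what makes the dissipative term land in a space dual to an admissible Strichartz pair. Consequently $G\in L^2_tL^{\frac{2d}{d+2\alpha}}_x(\R^+)$, the integral defining $u_+$ converges in $L^2$, and the tail estimate gives $\|u(t)-S_{a,\alpha,s}(t)u_+\|_{L^2}\lesssim\|G\|_{L^2_tL^{\frac{2d}{d+2\alpha}}_x([t,\infty))}\to0$.

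The main obstacle is the first step, namely the Strichartz estimates for the radial fractional propagator with no loss of derivatives at the admissible exponents used above; outside the radial class these fail for $\alpha\neq1$, which is exactly why the theorem is stated in $H^\alpha_{rd}$ and in the range $\alpha\in(\frac{d}{2d-1},1)$. Everything else is soft: once these estimates and the dissipation bound \eqref{masse} are in hand, the scattering follows from interpolation and the dual inhomogeneous estimate, and the restriction $s=\alpha$ enters only through the Sobolev bookkeeping for the dissipative source term.
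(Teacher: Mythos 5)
Your part (2) is correct and is essentially the paper's own argument (Remark \ref{normecrtique} built on Lemma \ref{lemma2}): the mass dissipation identity gives $u\in L^2_t\dot H^\alpha_x(\R^+)$ with norm $O(a^{-1/2})$ when $s=\alpha$, and Sobolev embedding plus interpolation with $L^\infty_tL^2_x$ places $u$ in $L^{\frac{4\alpha}{d}+2}_{t,x}(\R^+\times\R^d)$ with a constant that vanishes as $a\to+\infty$; your bookkeeping of the exponents is right.

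Part (1), however, contains a genuine gap, and it sits in the step you treat as routine. First, your Duhamel decomposition is inconsistent: you put the dissipation into the source $G=-a(-\Delta)^\alpha u+i|u|^{p-1}u$ (Duhamel relative to the free group), yet the propagator in your formula for $u_+$ and in the scattering identity is the damped semigroup $S_{a,\alpha,s}$, which already contains the factor $e^{-at(-\Delta)^{\alpha}}$; as written the dissipation is counted twice and the identity $u(t)-S_{a,\alpha,s}(t)u_+=-\int_t^\infty S_{a,\alpha,s}(t-\tau)G(\tau)\,d\tau$ does not hold. Second, and more seriously, the estimate you use to put the dissipative source into a dual Strichartz space, $\|(-\Delta)^\alpha u\|_{L^{2d/(d+2\alpha)}}\lesssim\|(-\Delta)^{\alpha/2}u\|_{L^2}$, rests on the claimed embedding $\dot H^{-\alpha}(\R^d)\hookrightarrow L^{2d/(d+2\alpha)}(\R^d)$, which goes the wrong way: Sobolev gives $\dot H^{\alpha}\hookrightarrow L^{2d/(d-2\alpha)}$ and hence, by duality, $L^{2d/(d+2\alpha)}\hookrightarrow\dot H^{-\alpha}$, not the converse (a sum of $N$ widely separated bumps has $\dot H^{-\alpha}$ norm of order $N^{1/2}$ but $L^{2d/(d+2\alpha)}$ norm of order $N^{(d+2\alpha)/(2d)}\gg N^{1/2}$). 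So the dissipative piece of $G$ cannot be closed this way. The repair is simply to delete it: since the theorem asks for scattering to $S_{a,\alpha,s}(t)u_+$ and not to the free flow, Duhamel relative to $S_{a,\alpha,s}$ (whose Strichartz estimates Proposition \ref{LpLp} supplies via the $L^1$ bound on the kernel $H_{a,s}$) carries only the nonlinear source $i|u|^{p-1}u$. This is the paper's route: with $v(t)=S_{a,\alpha,s}(-t)u(t)$, the dual estimate for the diagonal pair gives $\|v(t)-v(\tau)\|_{L^2}\lesssim\|u\|^{\frac{4\alpha}{d}+1}_{L^{\frac{4\alpha}{d}+2}((t,\tau)\times\R^d)}$, and the global space-time bound from part (2) makes $v(t)$ Cauchy in $L^2$. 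Your treatment of the nonlinear piece of $G$ is correct, but the detour through the endpoint pair $(2,\tfrac{2d}{d-2\alpha})$ is only needed to accommodate the spurious dissipative term.
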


Acknowledgments : The author thanks Luc Molinet for his valuable remarks and comments in this paper.

\section{Local existence result}
Recall that the main tools to prove the local existence results for  the FNLS equation are the Strichartz estimates for the associated linear propagator $ e^{i(-\Delta)^{\alpha} t}$. Let us mention that in the case $ a > 0$ the same results on the local Cauchy problem for (\ref{NLSasalpha}) can be established  in exactly the same way as in the case $a = 0$, since the same Strichartz estimates hold.
\subsection{Strichartz estimate}
\begin{definition}
 A pair $(q,r)$, $ q,r\geq 2$ is said to be 
admissible if:
$$\frac{4d+2}{2d-1}\leq q \leq \infty,~~ \frac{2}{q} + \frac{2d-1}{r} \leq d-\frac{1}{2},$$\\
or\\
$$2 \leq \frac{4d+2}{2d-1},~~ \frac{2}{q} + \frac{2d-1}{r} < d-\frac{1}{2}.$$

\end{definition}

These Strichartz estimates read in the following proposition see \cite{Guo4}:
\begin{proposition}\label{strichartzforNLS}
Suppose $d\geq 2$, $a=0$ and  $u$ be a radial solution of \ref{NLSasalpha},  then  for every admissible pair $(q,r)$ satisfie the following condition:
$$
\frac{2}{q} + \frac{d}{r}=\frac{d}{2} - \gamma,~~ \frac{2}{\tilde{q}} + \frac{d}{\tilde {r}}=\frac{d}{2} + \gamma
$$ 
it holds:\\
$$\|u\|_{(L^q_t L^r_x \cap L^\infty_t H^{\gamma})} \leq \|u_0\|_{(L^q_tL^r_x\cap L^\infty_t H^{\gamma})} + \||u|^p\|_{(L^{\tilde q^\prime}L^{\tilde r^{\prime})}}$$

\end{proposition}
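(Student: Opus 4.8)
The plan is to read the stated inequality as the standard mixed-norm Strichartz bound applied to the Duhamel formulation $u = e^{it(-\Delta)^\alpha}u_0 - i\int_0^t e^{i(t-s)(-\Delta)^\alpha}F(s)\,ds$ with forcing $F=|u|^{p-1}u$, and to split it into two independent estimates: a homogeneous one $\|e^{it(-\Delta)^\alpha}u_0\|_{L^q_tL^r_x\cap L^\infty_tH^\gamma}\lesssim\|u_0\|_{H^\gamma}$, and a retarded (inhomogeneous) one for the Duhamel integral controlled by $\|F\|_{L^{\tilde q'}_tL^{\tilde r'}_x}$. For both I would first pass to a single dyadic frequency shell via a Littlewood--Paley projection $P_N$, $N=2^j$, since the radial improvement and the decay rate are cleanest on a fixed shell and can be summed afterwards using the stated scaling identity $\frac{2}{q}+\frac{d}{r}=\frac{d}{2}-\gamma$, which is exactly the homogeneity relation that makes $\sum_N$ of the frequency-localized pieces reconstitute the $H^\gamma$ norm.

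The analytic heart is the frequency-localized dispersive (decay-in-time) estimate for the kernel $K_N(t,x)=\int e^{i(x\cdot\xi+t|\xi|^{2\alpha})}\psi(\xi/N)\,d\xi$. Using radiality I would reduce this $d$-dimensional oscillatory integral to a one-dimensional one carrying a Bessel factor $J_{(d-2)/2}(|x|\rho)$, insert the large-argument asymptotics of the Bessel function, and run a van der Corput / stationary-phase analysis on the resulting radial phase. The key subtlety is that, because $\alpha<1$, the symbol $|\xi|^{2\alpha}$ has a degenerate critical behaviour near the origin (its Hessian fails the classical non-degeneracy that the $\alpha=1$ Schrödinger case enjoys), which is what costs decay; the radial structure is precisely the mechanism that recovers it, and the decay exponent one extracts is what pins down the admissibility region and forces the threshold $\alpha>\frac{d}{2d-1}$. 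I expect this degenerate phase/Bessel analysis to be the main obstacle: one must track the competing contributions of the degenerate stationary point and the boundary of the shell carefully enough that the extracted decay rate matches the full admissible set in the Definition, rather than a strictly smaller range.

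With the $L^1\to L^\infty$ dispersive bound on each shell in hand, I would couple it to the trivial $L^2\to L^2$ bound (unitarity of $e^{it(-\Delta)^\alpha}$) and feed the pair into the abstract Keel--Tao $TT^*$ framework to produce $L^q_tL^r_x$ control of each $P_N$-piece over the entire admissible range, then sum in $N$ using the scaling relation to obtain the homogeneous estimate together with the $L^\infty_tH^\gamma$ component. For the inhomogeneous term I would dualize the homogeneous estimate to bound $\int e^{-is(-\Delta)^\alpha}F(s)\,ds$ in $H^\gamma$ by $\|F\|_{L^{\tilde q'}_tL^{\tilde r'}_x}$, obtaining first the non-retarded bilinear estimate; the retarded integral $\int_0^t$ then follows from the Christ--Kiselev lemma, which applies because the admissibility constraints guarantee the needed strict inequality $\tilde q'<q$ between the exponents. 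Assembling the homogeneous and retarded pieces, and invoking \eqref{NLSasalpha} to identify $u$ with the Duhamel formula, yields the asserted estimate.
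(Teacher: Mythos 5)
The paper offers no proof of this proposition at all: it is quoted verbatim from Guo--Wang \cite{Guo4} (``These Strichartz estimates read in the following proposition see \cite{Guo4}''), so there is no internal argument to compare yours against. Your outline reconstructs the standard architecture for such results --- Duhamel splitting, Littlewood--Paley localization, radial reduction to a Bessel-weighted one-dimensional oscillatory integral, $TT^*$, Christ--Kiselev --- and that is indeed the family of techniques the cited reference draws on.

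There is, however, a genuine gap at the step where you feed the fixed-shell $L^1\to L^\infty$ dispersive bound and $L^2$ unitarity into the Keel--Tao framework and claim control ``over the entire admissible range.'' The admissible region in the paper's Definition is the \emph{radially improved} one, $\frac{2}{q}+\frac{2d-1}{r}\le d-\frac12$, which is strictly larger than the classical region $\frac{2}{q}+\frac{d}{r}\le\frac{d}{2}$. Interpolating a kernel decay bound against unitarity can only ever reproduce the classical region: on a unit frequency shell the phase $|\xi|^{2\alpha}$ is in fact non-degenerate for the relevant $\alpha>\frac12$ (the true cost of $\alpha\ne 1$ is the inhomogeneous rescaling between shells, i.e.\ the derivative index $\gamma$, not a degenerate Hessian at the origin), so the uniform $L^1\to L^\infty$ decay rate is the same $|t|^{-d/2}$ with or without radiality. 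The enlargement of the admissible set comes from excluding Knapp-type data, and that information is invisible to a $TT^*$ argument built on a uniform kernel bound. To reach the stated range one needs the additional input of \cite{Guo4}: a physical-space dyadic decomposition in $|x|$ relative to the time scale, the improved decay of $J_{(d-2)/2}$ off the critical set, and $L^2$-based square-function/local-smoothing estimates exploiting the effective one-dimensionality of radial data. You correctly flag the risk of landing in ``a strictly smaller range,'' but the mechanism you propose cannot avoid it; this is a missing idea rather than an unexecuted computation. The remaining steps (dualization for the inhomogeneous term, Christ--Kiselev under $\tilde q'<q$) are standard and fine away from endpoints.
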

For $ a\ge 0 $, $\alpha \geq 0$ and  $s\ge 0 $ we denote by $S_{a,\alpha,s}$ the linear semi-group associated with \eqref{NLSasalpha}, i.e.
$ S_{a,\alpha,s}(t)=e^{i(-\Delta)^{\alpha} t -a(-\Delta)^s t}$. It is worth noticing that  $S_{a,\alpha,s}$ is irreversible.\\
We will see in the following  proposition that that the linear semi-group $ S_{a,\alpha,s}$  enjoys the same Strichartz estimates as $ e^{i(-\Delta)^{\alpha} t}$. 
\begin{proposition}\label{LpLp}
Let $d \geq 2$,$u_0 \in H^{\gamma}(\mathbb{R}^{d})$, $\gamma \in \mathbb{R}, s\geq 0$ and $\frac{d}{2d-1} < \alpha <1$. Then  for every admissible pair $(q,r)$ satisfie the following condition:
$$
\frac{2}{q} + \frac{d}{r}=\frac{d}{2} - \gamma,~~ \frac{2}{\tilde{q}} + \frac{d}{\tilde {r}}=\frac{d}{2} + \gamma
$$ 
it holds:\\
$$\|u\|_{(L^q_t L^r_x \cap L^\infty_t H^{\gamma})} \leq \|u_0\|_{(L^q_tL^r_x\cap L^\infty_t H^{\gamma})} + \||u|^p\|_{(L^{\tilde q^\prime}L^{\tilde r^{\prime})}}$$
  

\end{proposition}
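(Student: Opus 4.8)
The plan is to reduce the claim to the undamped Strichartz estimates of Proposition \ref{strichartzforNLS} by exploiting that the only difference between $S_{a,\alpha,s}(t)$ and $e^{it(-\Delta)^\alpha}$ is the Fourier multiplier $e^{-at|\xi|^{2s}}$, which for $t\ge 0$ and $a\ge0$ is a \emph{smoothing} factor of modulus at most one. Since both factors are Fourier multipliers in $x$ they commute, so one writes $S_{a,\alpha,s}(t)=D_t\,e^{it(-\Delta)^\alpha}$ with $D_t:=e^{-at(-\Delta)^s}$.

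The central step is a uniform boundedness lemma: for every $1<r<\infty$ there is a constant $C=C(d,r)$ with $\|D_t f\|_{L^r}\le C\|f\|_{L^r}$ for all $t\ge0$, $a\ge0$, and $\|D_t f\|_{H^\gamma}\le\|f\|_{H^\gamma}$. The $H^\gamma$ (and $L^2$) bound is immediate from $0<e^{-at|\xi|^{2s}}\le1$ and Plancherel. The $L^r$ bound I would obtain from the Mihlin--H\"ormander multiplier theorem: writing $u=at|\xi|^{2s}\ge0$, each quantity $|\xi|^{|\beta|}\,|\partial_\xi^\beta e^{-at|\xi|^{2s}}|$ is, up to combinatorial constants, a finite sum of terms $u^k e^{-u}$, hence bounded on $[0,\infty)$ uniformly in $a$, $t$ and $s$. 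Thus $D_t$ satisfies the Mihlin condition with a constant independent of $t$ and $a$, which is exactly what is needed.

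Granting the lemma, the homogeneous estimate follows at once: applying it in $x$ for each fixed $t$ gives $\|S_{a,\alpha,s}(t)u_0\|_{L^r_x}\le C\|e^{it(-\Delta)^\alpha}u_0\|_{L^r_x}$, and taking the $L^q_t$ norm reduces the bound to the $a=0$ case of Proposition \ref{strichartzforNLS}; the $L^\infty_t H^\gamma$ part is handled by the contractivity of $D_t$ on $H^\gamma$. For a solution of \eqref{NLSasalpha} one then inserts the Duhamel representation $u(t)=S_{a,\alpha,s}(t)u_0-i\int_0^t S_{a,\alpha,s}(t-\tau)(|u|^{p-1}u)(\tau)\,d\tau$ and estimates the two resulting terms separately against the right-hand side.

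The delicate point will be the inhomogeneous (Duhamel) term, since in $S_{a,\alpha,s}(t-\tau)=D_{t-\tau}e^{i(t-\tau)(-\Delta)^\alpha}$ the damping factor $D_{t-\tau}$ couples the two time variables and cannot be pulled out of the $\tau$-integral; I expect this to be the main obstacle. For the range actually used in the theorems, $0<s\le1$, it is resolved cleanly by noting that $e^{-at(-\Delta)^s}$ is then convolution with the radial probability density $K_{at,s}$ of an $\alpha$-stable type process, so $\|K_{at,s}\|_{L^1}=1$; consequently $S_{a,\alpha,s}(t)=K_{at,s}*e^{it(-\Delta)^\alpha}$ inherits the same $L^1\to L^\infty$ dispersive bound and the same $L^2\to L^2$ bound as the undamped propagator (convolution with an $L^1$-contraction preserves both and preserves radial symmetry), so the full set of Strichartz estimates transfers verbatim through the abstract Strichartz argument of Keel--Tao underlying Proposition \ref{strichartzforNLS}. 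For general $s\ge0$, where $K_{at,s}$ need not be a positive $L^1$ kernel, the homogeneous estimate still follows from the Mihlin reduction above, and the inhomogeneous estimate would be recovered by the same $TT^*$/duality argument using the uniform $L^r$- and $L^{r'}$-bounds for $D_{t-\tau}$ in place of the $L^1$-contraction.
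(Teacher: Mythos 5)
Your decomposition $S_{a,\alpha,s}(t)=D_t\,e^{it(-\Delta)^{\alpha}}$ is exactly the one the paper uses: there the damping factor is realized as convolution with the kernel $H_{a,s}(t,\cdot)$ of $e^{-at(-\Delta)^{s}}$, and the key input is the scaling identity $\|H_{a,s}(t,\cdot)\|_{L^1}=\|H_{1,s}(1,\cdot)\|_{L^1}<\infty$ for all $s>0$ (Lemma 2.1 of \cite{Zhang}), after which Young's inequality gives the uniform $L^r$ and $H^{\gamma}$ bounds in one stroke. Your Mihlin--H\"ormander argument is a legitimate alternative route to the same uniform multiplier bound, though it is heavier than needed and, unlike the $L^1$-kernel bound, does not directly give the $L^1\to L^{\infty}$ dispersive transfer you later rely on. Where you genuinely depart from (and improve on) the paper is the inhomogeneous term: the paper's displayed chain simply factors $\|H_{a,s}(t-t',\cdot)\|_{L^1}$ out of the Duhamel integral while leaving the convolution inside, which as written is not a valid step precisely because, as you observe, $D_{t-\tau}$ couples the two time variables. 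Your fix --- either apply the convolution bound pointwise in $\tau$ and invoke the retarded Strichartz estimate for $e^{it(-\Delta)^{\alpha}}$, or rerun the Keel--Tao/$TT^{*}$ machinery for the damped propagator using the transferred dispersive and energy bounds --- is the correct way to close this, and for the range $0<s\le 1$ your observation that $K_{at,s}$ is a positive unit-mass kernel makes the transfer immediate. One small caution: the positivity/probability-density claim fails for $s>1$, but only $\|K_{at,s}\|_{L^1}\lesssim 1$ is actually needed, and that holds for every $s>0$; with that adjustment your argument is complete and in fact more careful than the paper's own.
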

\begin{proof} Let for any $ t\ge 0 $,  $\displaystyle{H_{a,s}(t,x) =\int e^{-ix\xi} e^{-at|\xi|^{2s}} d\xi}$, it holds 
$$
S_{a,\alpha,s}(t)\varphi =H_{a,s}(t,\cdot) \ast e^{it(-\Delta)^{\alpha}}\varphi , \quad \forall t\ge 0 \, .
$$
Noticing that for $ s>0$, $\|H_{a,s}(t,.)\|_{L^1} = \|H_{1,s}(1,.)\|_{L^1}$  and that, according to 
  Lemma 2.1 in \cite{Zhang}, $H_{1,s}(1,.) \in L^{1}(\mathbb{R}^{d})$ for $s>0$. Now let $g = |u|^p$ we have that:
\begin{align}
 \|u\|_{(L^q_tL^r_x \cap L^\infty_t H^{\gamma})}&= \|S_{a,\alpha,s}(t)(u_0) - i\int_0^t S_{a,\alpha,s}(t-t^{\prime})f(t^\prime)dt^{\prime}\|_{(L^q_tL^r_x\cap L^\infty_t H^{\gamma})}\nonumber \\ 
 &\leq  \|S_{a,\alpha,s}(t)(u_0)\|_{(L^q_t L^r_x \cap L^\infty_t H^{\gamma})} + \|\int_0^t S_{a,\alpha,s}(t-t^{\prime})f(t^\prime)\|_{(L^q_tL^r_x \cap L^\infty_tH^{\gamma})}\nonumber\\
 &=\|H_{a,s}(t,) \ast e^{it(-\Delta)^{\alpha}}(u_0)\|_{(L^q_tL^r_x \cap L^\infty_t H^{\gamma})}\nonumber\\& + \|\int_0^t H_{a,s}(t-t^{\prime})\ast e^{it(-\Delta)^{\alpha}}( f(t^{\prime}))dt^{\prime}\|_{(L^q_tL^r_x \cap L^\infty_t H^{\gamma})}\nonumber\\
 & \leq \|H_{a,s}(t,)\|_{L^1} \| e^{it(-\Delta)^{\alpha}}(u_0)\|_{(L^q_tL^r_x \cap L^\infty_t H^{\gamma})} \nonumber\\
 & + \|H_{a,s}(t-t^\prime,)\|_{L^1}\|\int_0^t H_{a,s}(t-t^{\prime}) \ast e^{it(-\Delta)^{\alpha}}( f(t^{\prime}))dt^{\prime}\|_{(L^q_tL^r_x \cap L^\infty_t H^{\gamma})}\nonumber\\
 &\leq \|u_0\|_{H^\gamma} + \| f\|_{L^{\tilde q^\prime}L^{\tilde r^{\prime}}}\nonumber
 \end{align}
 
 
 \end{proof}
 
 With Proposition \ref{LpLp} in the hand, it is not too hard to check that  the local existence results for equation \ref{NLSasalpha} (see  Theorem 4.2 in \cite{Guo4}). More precisely, we have the following statement: 
\begin{proposition}\label{prop1}
Let $ a\ge 0$, $ \alpha \in (\frac{2d}{2d-1},1)$, $s>0$  and $u_0 \in H_{rd}^\alpha(\R^d)$ with $ d \geq 2$. There exists $ T>0 $ and a unique solution $ u\in C([0,T]; H_{rd}^\alpha)\cap L^{\frac{4\alpha}{d}+2}_T  L^{\frac{4\alpha}{d}+2} $ to \eqref{NLSasalpha} emanating from $ u_0 $. 
 \end{proposition}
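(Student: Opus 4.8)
The plan is to reformulate \eqref{NLSasalpha} as the Duhamel integral equation
$$u(t) = S_{a,\alpha,s}(t)u_0 - i\int_0^t S_{a,\alpha,s}(t-t')\,|u(t')|^{p-1}u(t')\,dt',$$
and to solve it by a contraction-mapping argument in a suitable Strichartz space. The decisive observation is that, by Proposition \ref{LpLp}, the damped propagator $S_{a,\alpha,s}$ obeys exactly the same Strichartz estimates as the conservative propagator $e^{it(-\Delta)^{\alpha}}$; hence the entire fixed-point scheme of Theorem 4.2 in \cite{Guo4} can be transcribed with $e^{it(-\Delta)^{\alpha}}$ replaced by $S_{a,\alpha,s}(t)$, the only substantive change being the use of Proposition \ref{LpLp} in place of the undamped Strichartz inequalities.

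First I would fix admissible pairs adapted to the $L^2$-critical exponent $p=1+\frac{4\alpha}{d}$. Working in $H^\alpha$ is energy-subcritical, the scaling-critical Sobolev index being $s_c=\frac d2-\frac{2\alpha}{p-1}=0<\alpha$, so it suffices to run the contraction on a closed ball
$$B_R=\{\,u\in X_T:\ \|u\|_{X_T}\le R\,\},\qquad X_T=C([0,T];H^\alpha_{rd})\cap L^q_TL^r_x,$$
where $(q,r)$ is the admissible pair associated with the regularity $\gamma=\alpha$ and the dual pair $(\tilde q^\prime,\tilde r^\prime)$ receives the nonlinearity. Applying Proposition \ref{LpLp} to the map
$$\Phi(u)(t)=S_{a,\alpha,s}(t)u_0 - i\int_0^t S_{a,\alpha,s}(t-t')\,|u|^{p-1}u(t')\,dt',$$
one controls $\|\Phi(u)\|_{X_T}$ by $\|u_0\|_{H^\alpha}+\bigl\||u|^{p-1}u\bigr\|_{L^{\tilde q^\prime}_TL^{\tilde r^\prime}_x}$.

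The key step is the nonlinear estimate. By H\"older in time and space together with the Sobolev embedding furnished by the admissible pair, one derives a bound of the form $\bigl\||u|^{p-1}u\bigr\|_{L^{\tilde q^\prime}_TL^{\tilde r^\prime}_x}\lesssim T^{\theta}\|u\|_{X_T}^{p}$ for some $\theta>0$ produced precisely by the subcriticality $s_c<\alpha$; the companion difference estimate $\bigl\||u|^{p-1}u-|v|^{p-1}v\bigr\|_{L^{\tilde q^\prime}_TL^{\tilde r^\prime}_x}\lesssim T^{\theta}\bigl(\|u\|_{X_T}^{p-1}+\|v\|_{X_T}^{p-1}\bigr)\|u-v\|_{X_T}$ then yields the contraction. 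Choosing $R\sim 2C\|u_0\|_{H^\alpha}$ and then $T$ small enough that $CT^{\theta}R^{p-1}\le\frac12$, the map $\Phi$ sends $B_R$ into itself and is a strict contraction, so Banach's fixed-point theorem gives a unique solution in $B_R$.

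The main obstacle is the low regularity of the nonlinearity: since $p-1=\frac{4\alpha}{d}$ may be strictly less than $1$, the map $z\mapsto|z|^{p-1}z$ is only H\"older continuous, so the $H^\alpha$-level bound cannot rely on a naive Leibniz rule and instead requires a fractional chain/Leibniz (Kato--Ponce type) inequality valid for H\"older nonlinearities, exactly as in \cite{Guo4}. Once this estimate is secured, the remaining points---upgrading the fixed point to a genuine solution of \eqref{NLSasalpha}, continuity in time with values in $H^\alpha_{rd}$, and preservation of radial symmetry, which holds since both $S_{a,\alpha,s}$ and $u\mapsto|u|^{p-1}u$ commute with rotations---are routine.
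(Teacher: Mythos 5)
Your proposal is correct and follows essentially the same route as the paper, which simply observes that the damped semigroup $S_{a,\alpha,s}$ satisfies the same Strichartz estimates (Proposition \ref{LpLp}) and then invokes the fixed-point argument of Theorem 4.2 in \cite{Guo4}; you merely spell out the contraction-mapping details that the paper leaves to the reader.
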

 
 \section{global existence results}
 In this section, we will prove the global existence results. Let us start by the second theorem:
 \subsection{Proof of theorem \ref{theorem2}:}
 
 To prove theorem \ref{theorem2}, we will establish an a priori estimate on the Strichartz norm.

\begin{proposition}\label{LpalphaLpalpha}
Suppose that $a>0$, $\alpha >0$ and $ s>0$. Then there exists $\epsilon > 0$ such that if $ u_0$ in $H_{rd}^{\alpha}(\mathbb{R}^{d})$ and $ \|S_{a,\alpha,s}(\cdot)u_0\|_{L^{\frac{4\alpha}{d}+2}L^{\frac{4\alpha}{d}+2}} \leq \epsilon$, then the maximal time of the existence  $T^*$ of the solution emanating from $u_0$ equal to $+\infty$.
\end{proposition}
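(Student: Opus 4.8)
The plan is to run the standard $L^2$-critical small-data continuation argument, but with the damped propagator $S_{a,\alpha,s}$ in place of the free one; the essential point is that, by Proposition~\ref{LpLp}, the dissipative semigroup obeys exactly the same Strichartz estimates, so the friction term does not obstruct the scheme (through the $L^1$ kernel $H_{a,s}$ it can only help). Write $p_0=\frac{4\alpha}{d}+2$, so that the power is $p=1+\frac{4\alpha}{d}=p_0-1>1$. The algebraic fact peculiar to the mass-critical exponent is that $(p_0,p_0)$ is the admissible pair associated with $\gamma=0$ (this is precisely where the range $\alpha\in(\frac{d}{2d-1},1)$ enters, guaranteeing admissibility) and that its dual exactly matches the nonlinearity: since $p\,p_0'=p_0$, Hölder gives
$$\||u|^{p-1}u\|_{L^{p_0'}_tL^{p_0'}_x}=\|u\|_{L^{p_0}_tL^{p_0}_x}^{\,p}.$$
Hence the nonlinear term closes on the single Strichartz space $L^{p_0}_tL^{p_0}_x$.

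First I would insert the Duhamel formula $u(t)=S_{a,\alpha,s}(t)u_0+i\int_0^t S_{a,\alpha,s}(t-t')|u|^{p-1}u(t')\,dt'$ into the inhomogeneous Strichartz estimate of Proposition~\ref{LpLp} (with $\gamma=0$) on an arbitrary subinterval $[0,T]\subset[0,T^*)$; combined with the identity above this yields
$$\|u\|_{L^{p_0}_TL^{p_0}_x}\le C\,\|S_{a,\alpha,s}(\cdot)u_0\|_{L^{p_0}_tL^{p_0}_x}+C\,\|u\|_{L^{p_0}_TL^{p_0}_x}^{\,p}\le C\epsilon+C\,\|u\|_{L^{p_0}_TL^{p_0}_x}^{\,p}.$$
Then I would close a continuity argument: the map $T\mapsto\phi(T):=\|u\|_{L^{p_0}_TL^{p_0}_x}$ is continuous, nondecreasing and vanishes at $T=0$, and because $p>1$, choosing $\epsilon$ small enough forces $\phi$ to stay on the lower branch of $\phi\le C\epsilon+C\phi^{\,p}$ (it cannot cross the gap where $C\epsilon+Cx^{p}-x<0$), so that $\phi(T)\le 2C\epsilon$ for all $T<T^*$. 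Consequently $\|u\|_{L^{p_0}([0,T^*)\times\R^d)}\le 2C\epsilon<\infty$.

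The remaining and, I expect, hardest step is to turn this a priori bound into global existence, i.e.\ to justify the blow-up alternative behind the local theory of Proposition~\ref{prop1}: if $T^*<\infty$ then $\|u\|_{L^{p_0}([0,T^*)\times\R^d)}=+\infty$. I would prove this by upgrading the control to the $H^\alpha$ level, applying the Strichartz estimate at regularity $\gamma=\alpha$ together with a fractional Leibniz rule to bound the nonlinearity in the dual Strichartz norm by $\|u\|_{L^{p_0}_tL^{p_0}_x}^{\,p-1}$ times the $\alpha$-level Strichartz norm of $u$; since $\|u\|_{L^{p_0}L^{p_0}}\le 2C\epsilon$ is small, the factor $\|u\|_{L^{p_0}L^{p_0}}^{\,p-1}$ is absorbed into the left-hand side and yields $\sup_{0\le t<T^*}\|u(t)\|_{H^\alpha}<\infty$, which contradicts blow-up unless $T^*=+\infty$. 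The delicate points are precisely the fractional chain/Leibniz estimates for $|u|^{p-1}u$ in $H^\alpha$ with $0<\alpha<1$ and the verification that the radial Strichartz estimates are invoked with an admissible pair at regularity $\alpha$; by comparison, the mass-critical bookkeeping and the continuity argument are routine.
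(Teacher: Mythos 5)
Your proposal is correct and follows essentially the same route as the paper: the pair $(\theta,r)$ the paper works with is exactly the diagonal mass-critical pair $(\frac{4\alpha}{d}+2,\frac{4\alpha}{d}+2)$, so its first step (bootstrap/continuity argument giving $\|u\|_{L^{p_0}_tL^{p_0}_x}\lesssim\epsilon$ from the smallness of $\|S_{a,\alpha,s}(\cdot)u_0\|_{L^{p_0}L^{p_0}}$) and its second step (fractional Leibniz plus Strichartz at regularity $\alpha$, absorbing the factor $\epsilon^{p-1}$ to get $\|u\|_{L^\infty_tH^\alpha}<\infty$ and hence $T^*=+\infty$) are precisely the paper's two steps. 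The only cosmetic difference is that you make the continuity argument and the blow-up alternative (already recorded in the paper as Proposition \ref{prop2}) explicit, where the paper leaves them implicit.
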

To prove this claim, we will use the following proposition:
\begin{proposition}\label{propodeCazenave}
 There exists $\delta > 0$ with the following property. If $u_0 \in  L^2(R^ d)$ and $ T \in (0,\infty]$ are such
that $\|S_{a,\alpha,s}(.)u_0\|_{L^{p+1}([0,T],L^{p+1})} < \delta$, there exists a unique solution $u \in C([0,T],L^2(R^d))\cap L^{p+1}([0,T], L^{p+1}(R^d))$ of 
(\ref{NLSasalpha}).
In addition, $u\in L^q([0,T],L^r(R^d))$ for every admissible pair $(q,r)$, for $t \in [0,T]$.Finally, $u$
depends continuously in $C([0,T],L^2(R^d)) \cap L^{p+1}([0,T],L^{p+1}(R^d))$ on $u_0 \in L^2(R^d)$. If $u_0 \in  H^\alpha(R^d)$, then $u \in C([0,T],H^\alpha(R^d))$.
\end{proposition}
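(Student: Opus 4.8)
The plan is to realize the solution as the unique fixed point of the Duhamel map
$$\Phi(u)(t) = S_{a,\alpha,s}(t)u_0 - i\int_0^t S_{a,\alpha,s}(t-t')\,|u|^{p-1}u(t')\,dt',$$
on a suitable complete metric space, following the classical Cazenave scheme. Setting $M = \|S_{a,\alpha,s}(\cdot)u_0\|_{L^{p+1}([0,T],L^{p+1})} < \delta$, I would work in the closed ball
$$X_T = \Big\{ u \in C([0,T],L^2)\cap L^{p+1}([0,T],L^{p+1}) : \|u\|_{L^{p+1}L^{p+1}} \le 2M \Big\},$$
endowed with the distance $\rho(u,v) = \|u-v\|_{L^{p+1}L^{p+1}}$, for which $X_T$ is complete. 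The observation that lets the argument run for the dissipative, irreversible semigroup exactly as in the conservative case is Proposition \ref{LpLp}: since $S_{a,\alpha,s}$ satisfies the same Strichartz estimates as $e^{it(-\Delta)^{\alpha}}$, neither the irreversibility nor the extra dissipation enters the contraction.

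The heart of the proof is the nonlinear estimate. Applying the inhomogeneous Strichartz bound of Proposition \ref{LpLp} to $\Phi(u)$ gives
$$\|\Phi(u)\|_{L^{p+1}L^{p+1}\cap L^\infty L^2} \le M + C\big\||u|^{p-1}u\big\|_{L^{\tilde q'}L^{\tilde r'}},$$
where I take the dual pair $(\tilde q,\tilde r)=(p+1,p+1)$, which the $L^2$-critical relation $p=1+\frac{4\alpha}{d}$ renders the appropriate dual Strichartz exponent, since it is exactly the choice for which Hölder in space and time balances:
$$\big\||u|^{p-1}u\big\|_{L^{\tilde q'}L^{\tilde r'}} \le C\,\|u\|_{L^{p+1}L^{p+1}}^{p}.$$
The corresponding Lipschitz bound $\||u|^{p-1}u-|v|^{p-1}v\|_{L^{\tilde q'}L^{\tilde r'}} \le C(\|u\|^{p-1}+\|v\|^{p-1})\|u-v\|$ in the same norms follows likewise. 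Thus on $X_T$ one obtains $\|\Phi(u)\| \le M + C(2M)^{p}$ and $\rho(\Phi(u),\Phi(v)) \le C(2M)^{p-1}\rho(u,v)$. Choosing $\delta$ (hence $M$) small enough that $C(2M)^{p-1}\le\tfrac12$ and $C(2M)^{p}\le M$, the map $\Phi$ sends $X_T$ into itself and is a strict contraction. This is where the smallness hypothesis is used, and, crucially, the estimate is uniform in $T$, so the argument applies verbatim to $T=+\infty$.

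The fixed point is the desired solution $u \in C([0,T],L^2)\cap L^{p+1}([0,T],L^{p+1})$, unique by the contraction property. Inserting $u$ back into the Strichartz inequality with an arbitrary admissible pair $(q,r)$ on the left and the same nonlinear estimate on the right yields $u \in L^q([0,T],L^r)$ for every admissible pair. Continuous dependence follows by applying the Lipschitz estimate to two data and bounding $\rho(u,v) \lesssim \|S_{a,\alpha,s}(\cdot)(u_0-v_0)\|_{L^{p+1}L^{p+1}} \lesssim \|u_0-v_0\|_{L^2}$. Finally, for $u_0\in H^\alpha$ I would rerun the same fixed point in $C([0,T],H^\alpha)\cap L^{p+1}([0,T],W^{\alpha,p+1})$, controlling $(-\Delta)^{\alpha/2}(|u|^{p-1}u)$ by the nonlinear term again.

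The main obstacle is precisely this last persistence-of-regularity step: estimating the fractional derivative of the non-smooth nonlinearity $|u|^{p-1}u$, with $p=1+\frac{4\alpha}{d}$ generally non-integer, requires a Kato--Ponce type fractional Leibniz/chain rule valid for $0<\alpha<1$, and closing the resulting estimate in the critical Strichartz norm is the most delicate point; the $L^2$-level contraction itself is routine once the admissibility of $(p+1,p+1)$ and the Hölder balancing are checked.
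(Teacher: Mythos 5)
Your proposal is correct and follows essentially the same route as the paper, which gives no independent argument for this proposition but simply cites the Cazenave--Weissler contraction-mapping proof, the point (made in Section 2) being that $S_{a,\alpha,s}$ enjoys the same Strichartz estimates as $e^{it(-\Delta)^\alpha}$ so the classical fixed-point scheme in $C([0,T],L^2)\cap L^{p+1}L^{p+1}$ with the dual pair $(p+1,p+1)$ transfers verbatim. Your spelled-out contraction argument, including the uniformity in $T$ and the fractional Leibniz rule for the $H^\alpha$ persistence, is exactly the content of that cited proof.
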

See \cite {Cazenave2} for the proof
\begin{proposition}\label{prop2}
Let $u_0 \in H^{\alpha}$ and u be the solution of \ref{NLSasalpha}. Let $T^*$ be the maximal time of the existence of u such that  $\|u\|_{L^{\frac{4\alpha}{d}+2}([0,T^*[)L^{\frac{4\alpha}{d}+2}}  < + \infty$, then $T^* = + \infty$.

\end{proposition}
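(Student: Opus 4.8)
The plan is to argue by contradiction: assume $T^*<\infty$ and show that the solution can in fact be continued past $T^*$, contradicting the maximality of the existence time. The feature that makes the statement nontrivial is the $L^2$-criticality ($p=1+\frac{4\alpha}{d}$): the local existence time furnished by Proposition \ref{propodeCazenave} is \emph{not} controlled by the size of $\|u(t_0)\|_{H^\alpha}$, but only by the smallness of the \emph{free} Strichartz norm $\|S_{a,\alpha,s}(\cdot)u(t_0)\|_{L^{p+1}L^{p+1}}$. Hence the entire point is to convert the hypothesis that the \emph{nonlinear} Strichartz norm $\|u\|_{L^{p+1}([0,T^*))L^{p+1}}$ is finite into smallness of the free Strichartz norm of a suitable restart datum $u(t_0)$.

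First I would invoke absolute continuity of the integral: since $\|u\|_{L^{p+1}([0,T^*))L^{p+1}}<\infty$, I can choose $t_0<T^*$ so close to $T^*$ that $\|u\|_{L^{p+1}([t_0,T^*))L^{p+1}}<\eta$ for a small $\eta$ to be fixed. Writing Duhamel's formula on $[t_0,T^*)$ with datum $u(t_0)\in H_{rd}^\alpha$,
$$u(t)=S_{a,\alpha,s}(t-t_0)u(t_0)-i\int_{t_0}^t S_{a,\alpha,s}(t-t')\,|u|^{p-1}u(t')\,dt',$$
isolating the linear term and applying the homogeneous and inhomogeneous Strichartz estimates of Proposition \ref{LpLp} (with $\gamma=0$ and the critical pair $(q,r)=(p+1,p+1)$) gives
$$\|S_{a,\alpha,s}(\cdot-t_0)u(t_0)\|_{L^{p+1}([t_0,T^*))L^{p+1}}\le \|u\|_{L^{p+1}([t_0,T^*))L^{p+1}}+C\,\||u|^{p-1}u\|_{L^{(p+1)'}([t_0,T^*))L^{(p+1)'}}.$$
Here the $L^2$-criticality closes the dual exponent exactly: one checks $p\,(p+1)'=p+1$, so $\||u|^{p-1}u\|_{L^{(p+1)'}L^{(p+1)'}}=\|u\|_{L^{p+1}L^{p+1}}^{p}$, and the right-hand side is bounded by $\eta+C\eta^{p}$.

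I would then fix $\eta$ small enough that $\eta+C\eta^{p}<\delta/2$, where $\delta$ is the threshold of Proposition \ref{propodeCazenave}. Since the global homogeneous Strichartz estimate yields $S_{a,\alpha,s}(\cdot-t_0)u(t_0)\in L^{p+1}([t_0,\infty))L^{p+1}$, the map $T\mapsto \|S_{a,\alpha,s}(\cdot-t_0)u(t_0)\|_{L^{p+1}([t_0,T])L^{p+1}}$ is continuous, with value at $T=T^*$ at most $\delta/2<\delta$; hence there is $\varepsilon>0$ such that $\|S_{a,\alpha,s}(\cdot-t_0)u(t_0)\|_{L^{p+1}([t_0,T^*+\varepsilon))L^{p+1}}<\delta$. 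Applying Proposition \ref{propodeCazenave} with datum $u(t_0)$ on $[t_0,T^*+\varepsilon)$ produces a solution there which, by the uniqueness part of that proposition, coincides with $u$ on $[t_0,T^*)$ and extends it past $T^*$; as $u(t_0)\in H_{rd}^\alpha$ the extension lies in $C([t_0,T^*+\varepsilon),H_{rd}^\alpha)$, contradicting the maximality of $T^*$. Therefore $T^*=+\infty$.

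I expect the genuine obstacle to be conceptual rather than computational: because of criticality there is no continuation criterion based on the $H^\alpha$ norm alone, so the proof is forced to route through the free Strichartz norm and to exploit that the critical nonlinearity is dominated by the very norm assumed finite. Once this is in place, the passage across $T^*$ is comparatively benign, resting only on the continuity (dominated convergence) of the Strichartz integral together with the global-in-time homogeneous bound; the remaining checks—uniformity of the Strichartz constants on finite subintervals and the exponent bookkeeping $p(p+1)'=p+1$—are routine.
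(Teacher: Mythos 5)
Your proposal is correct and follows essentially the same route as the paper: contradiction via Duhamel at a restart time $t_0$ close to $T^*$, using Strichartz and the criticality identity $\||u|^{p-1}u\|_{L^{(p+1)'}L^{(p+1)'}}=\|u\|^{p}_{L^{p+1}L^{p+1}}$ to make the free Strichartz norm of $u(t_0)$ smaller than the threshold $\delta$ of Proposition \ref{propodeCazenave}, then extending past $T^*$. In fact you are slightly more careful than the paper at the endpoint, since you justify passing to an interval $[t_0,T^*+\varepsilon)$ by continuity of the free Strichartz norm, a step the paper leaves implicit.
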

\proof
 Observe that$ \|S_{a,\alpha,s}(.)u_0\|_{L^{\frac{4\alpha}{d}+2}[0,T[L^{\frac{4\alpha}{d}+2}} \longrightarrow 0$ as $T \longrightarrow  0$. Thus for
sufficiently small $T$, the hypotheses of Proposition \ref{propodeCazenave} are satisfied. Applying iteratively this proposition,
we can construct the maximal solution $u \in C([0,T^*),H^{\alpha}(\R^d)))\cap L^{\frac{4\alpha}{d}+2} ([0,T^*),L^{\frac{4\alpha}{d}+2}(\R^d))$ of (\ref{NLSasalpha}). We proceed by contradiction,
 assuming that  
$T^*< \infty$, and $\|u\|_{L^{\frac{4\alpha}{d}+2}(]0,T[,L^{\frac{4\alpha}{d}+2})} < \infty$. Let $t \in[0,T^*)$. For every $s \in [0,T^{*}-t)$ we have
$$
S(.)u(t) = u(t+.) +i\int_0^tS_{a,\alpha,s}(t-\tau)(|u|^{p-1}u)d\tau.
$$
Then by Strichartz estimate, there exists $K >0$ such that:
$$
\|S_{a,\alpha,s}(.)u(t)\|_{ L^{\frac{4\alpha}{d}+2} ([0,T^*-t),L^{\frac{4\alpha}{d}+2}(\R^d))} \leq \|u\|_{L^{\frac{4\alpha}{d}+2}(]t,T^{*}[,L^{\frac{4\alpha}{d}+2})} + K(\|u\|_{L^{\frac{4\alpha}{d}+2}(]t,T^*[,L^{\frac{4\alpha}{d}+2})})^{\frac{4}{d}+1}
$$
Therefore, for $t$ fixed close enough to $T^*$, it follows that
$$
\|S_{a,s,\alpha}(.)u(t)\|_{ L^{\sigma} ([0,T^*-t),L^{\sigma}(\R^d))} \leq \delta.
$$
Applying Proposition \ref{propodeCazenave}, we find that $u$ can be extended after $T^*$, which contradicts the maximality.\\

Now let us return to the \textbf{proof of proposition \ref{LpalphaLpalpha}}:\\

 Let $q = \frac{4\alpha(1+p)}{d(p-1)} = \frac{4\alpha}{d} + 2$ and $q^\prime = \frac{4\alpha + d}{4\alpha+2d}$, then $q^\prime$ verifies: $\frac{1}{q^\prime}=\frac{1}{q} + \frac{p-1}{\theta}$, by Holder inequalities and the Strichartz estimate we obtain:\\
 $$
 \|u\|_{L^q_TL^{p+1}} \lesssim \|u_0\|_{L^2} + \|u|u|^{p-1}\|_{L^{q^\prime}L^{\frac{p+1}{p}}} \lesssim \|u_0\|_{L^2} + \|u\|^{p}_{L^\theta L^{p+1}}\|u\|_{L^qL^{p+1}}
 $$
Remark that :
$$ \frac{2\alpha}{\theta} = d(\frac{1}{2}-\frac{1}{r}),~~ \frac{2\alpha}{\theta} = d(\frac{1}{2} - \frac{1}{r})$$
Now $\frac{2}{\theta} + \frac{2d-1}{r} < d - \frac{1}{2}$ iff $ \alpha > \frac{d}{2d-1}$\\
If we take $\gamma =0$ in the Strichartz estimate this gives with Holder inequality:
\begin{align}
\|u\|_{L^\theta L^r} &\lesssim \|S_{a,\alpha,s}()u_0\|_{L^\theta L^r} + \||u|^p\|_{L^{\tilde{\prime \theta}}L^{\tilde{\prime r}}}\nonumber\\
& \lesssim \epsilon + \|u\|^{p}_{L^\theta L^r} \nonumber\\
&\lesssim \epsilon \nonumber
\end{align}
Noticing that the fractional Leibniz rule (see \cite{Kenig}) and by Holder inequality, leads to 
\begin{align}
 \||u|^{p-1}u\|_{L^{q^{\prime}}H^{\alpha,r^\prime}} &\lesssim  \|(-\Delta)^{\alpha}u\|_{L^qL^r}
\|u\|^{p-1}_{L^\theta L^r}+ \|u\|^{p-1}_{L^\theta L^r}\|u\|_{L^qL^r} \nonumber\\
& \lesssim  \|u\|^{p-1}_{L^\theta L^r} \|u\|_{L^q H^{\alpha,r}}\nonumber\\ 
& \lesssim \epsilon^{p-1}\|u\|_{L^q H^{\alpha,r}}\nonumber
\end{align}
This implies, with Strichartz estimates in the hand, that:\\
$$\|u\|_{L^q H^{\alpha,r}} \lesssim \|u_0\|_{H^\alpha} + \epsilon^{p-1}\|u\|_{L^q H^{\alpha,r}}$$ \\ this gives that : \\
$$\|u\|_{L^q H^{\alpha,r}} \lesssim \frac{\|u_0\|_{H^\alpha}}{1-\epsilon^{p-1}}$$
Now, with Strichartz estimate:\\

\begin{align}
\|u\|_{L\infty H^{\alpha}} &\lesssim \|u_0\|_{H^{\alpha}} + \|u|u|^{p-1}\|_{L^{q^{\prime}}H^{\alpha,r^\prime}}\nonumber\\
&\lesssim \|u_0\|_{H^{\alpha}}  + \epsilon^{p-1}\frac{\|u_0\|_{H^\alpha}}{1-\epsilon^{p-1}}\nonumber\\
&\lesssim \frac{1 + \epsilon^{p-1}}{1-\epsilon^{p-1}}\|u_0\|_{H^\alpha}\nonumber
\end{align}
Then for $\epsilon$ small we obtain\\
$$ \|u\|_{L_T^\infty H^{\alpha}} < \infty$$
this gives that $T = \infty$.\\

Now we are ready to \textbf{prove  Theorem \ref{theorem2}}:\\

Let $ u\in C([0,T]; H_{rd}^{\alpha}(\R^d) $ be the solution emanating from some initial datum $ u_0\in H_{rd}^{\alpha}(\R^d)  $. We have the following a priori estimates:
\begin{lemma}\label{lemma2}
Let $u \in C([0,T];H_{rd}^\alpha(\R^d))$ be the  solution of (\ref{NLSasalpha}) emanating from $ u_0\in H_{rd}^{\alpha}(\R^d) $. Then
\begin{equation}
\label{ff}
\|u\|_{L^{\infty}_T L^{2}} \leq \|u_0\|_{L^2}~~ and ~~\|(-\Delta)^{\frac{s}{2}} u\|_{{L^2_T}{L^2}} \leq \frac{1}{\sqrt{2a}} \|u_0\|_{L^2} .
\end{equation}
\end{lemma}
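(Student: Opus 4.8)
The estimates in Lemma~\ref{lemma2} are pure mass/dissipation identities, so the plan is to derive the dissipation law for the \emph{squared} $L^2$-norm and then read off both bounds from it. Concretely, I would pair \eqref{NLSasalpha} with $\overline u$ in $L^2(\R^d)$ and integrate in space. The two self-adjoint contributions with real symbol, namely $\int (-\Delta)^{\alpha} u\,\overline u = \|(-\Delta)^{\alpha/2}u\|_{L^2}^2$ and $\int |u|^{p-1}u\,\overline u = \|u\|_{L^{p+1}}^{p+1}$, are real, while the terms $i\int u_t\,\overline u$ and $ia\int (-\Delta)^{s}u\,\overline u$ carry the imaginary content. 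Taking imaginary parts annihilates the dispersive and nonlinear terms and, using $\Im\bigl(i\int u_t\overline u\bigr)=\Re\int u_t\overline u=\tfrac12\tfrac{d}{dt}\|u\|_{L^2}^2$, leaves exactly the squared form of \eqref{masse},
\[
\frac{1}{2}\frac{d}{dt}\|u(t)\|_{L^2}^2 = -a\,\|(-\Delta)^{s/2}u(t)\|_{L^2}^2 .
\]

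From here both inequalities are immediate. Since the right-hand side is nonpositive, the map $t\mapsto \|u(t)\|_{L^2}^2$ is nonincreasing on $[0,T]$, hence $\|u(t)\|_{L^2}\le \|u_0\|_{L^2}$ for every $t$, which is the first bound $\|u\|_{L^\infty_T L^2}\le \|u_0\|_{L^2}$. For the second, I would integrate the displayed identity over $[0,T]$ and discard the nonnegative endpoint term to get
\[
2a\int_0^T \|(-\Delta)^{s/2}u(t)\|_{L^2}^2\,dt = \|u_0\|_{L^2}^2 - \|u(T)\|_{L^2}^2 \le \|u_0\|_{L^2}^2 ,
\]
so that dividing by $2a$ and taking square roots yields $\|(-\Delta)^{s/2}u\|_{L^2_T L^2}\le (2a)^{-1/2}\|u_0\|_{L^2}$.

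The only genuine difficulty is that \eqref{masse} was stated for \emph{smooth} solutions, whereas here $u$ lives only in $C([0,T];H^\alpha_{rd})$, so the pairing with $\overline u$ and the differentiation under the integral sign are not a priori licit; indeed, in the regime $s\ge\alpha$ relevant to Theorem~\ref{theorem2}, the quantity $\|(-\Delta)^{s/2}u\|_{L^2}$ could be infinite at the level of the data, and it is precisely this lemma that must establish its finiteness in $L^2_T$. I would resolve this by a standard regularization: approximate $u_0$ in $H^\alpha$ by smooth compactly supported radial data $u_{0,n}$, run Proposition~\ref{prop1} (and Proposition~\ref{propodeCazenave}) to obtain smooth solutions $u_n$ for which the identity is legitimate, derive the two bounds for each $u_n$ with constants independent of $n$, and pass to the limit using the continuous dependence in $C([0,T];L^2)$ together with the weak lower semicontinuity of the $L^2_T\dot H^s$ seminorm (Fatou in the frequency variable, via Plancherel). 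An alternative that avoids smoothing the data altogether is to exploit the parabolic-type smoothing of the dissipative semigroup $S_{a,\alpha,s}$: for $t>0$ the Duhamel solution is more regular than its datum, so the computation is justified on $[\varepsilon,T]$ and one lets $\varepsilon\to 0$.
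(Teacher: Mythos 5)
Your proof is correct and follows essentially the same route as the paper: the paper likewise invokes the mass dissipation identity \eqref{masse} (in its squared form, which you derive explicitly), integrates it over $[0,T]$ to get both bounds, and justifies the computation for rough data by approximating $u_0$ with a smooth sequence. Your write-up is in fact somewhat more careful about the limiting argument (lower semicontinuity of the $L^2_T\dot H^s$ norm), which the paper leaves implicit.
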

\begin{proof}Assume first that $ u_0 \in H^{\infty} (\R^d) $.  
Then (\ref{masse}) ensures that  the mass is decreasing  as soon as $u$ is not the null solution and  (\ref{masse}) leads to 
$$
\displaystyle{\int_0^T\|(-\Delta)^{\frac{s}{2}}u(t)\|^2_{L^2}\, dt  = -\frac{1}{2a}(\|u(T)\|_{L^2}^2 - \|u_0\|_{L^2}^2)}
\le   \frac{1}{2a }\|u_0\|_{L^2}^2.$$
This proves \eqref{ff}for smooth solutions. The result for $ u_0 \in H^{\alpha}(\R^d) $ follows by approximating $ u_0 $ in $ H^{\alpha} $ by a smooth sequence $ (u_{0,n})\subset H^\infty(\R^d) $.
\end{proof}
From the first estimate in \eqref{ff} we can obtain  that :\\
$$ \|u\|_{L^2_T L^{2}}\leq T^{\frac{1}{2}}\|u\|_{L^{\infty} L^2} \leq T^{1/2} \|u_0\|_{L^2}$$\\ and thus by interpolation:
\begin{equation}\label{est}
\|(-\Delta)^{\frac{\alpha}{2}} u\|_{L^2_T L^2}\lesssim \|(-\Delta)^{\frac{s}{2}}u\|^{\frac{\alpha}{s}}_{L^2_T L^2}\|u\|^{1-\frac{\alpha}{s}}_{L_T^{2}L^{2}} \lesssim \frac{1}{a^{\frac{\alpha}{s}}} T^{\frac{1}{2}(1-\frac{\alpha}{s})}
\end{equation}
Interpolating now between  (\ref{est}) and the first estimate of \eqref{ff} we get 
$$\|u\|_{L^{\theta}_T H^{\frac{2\alpha}{\theta}}} \lesssim \frac{1}{a^{\frac{2\alpha}{\theta s}}} T^{\frac{1}{2}(1-\frac{\alpha}{s})}~ \text{where} ~\theta = \frac{4\alpha}{d}+2$$
and the embedding  $H^{{\frac{2}{\theta}}}(\R^d) \hookrightarrow L^{ \frac{4\alpha}{d}+2}(\R^d)$ ensures that 
$$ \|u\|_{L^{\theta}_TL^{ \frac{4\alpha}{d}+2}} \lesssim \frac{1}{a^{\frac{2\alpha}{\theta s}}}T^{\frac{1}{2}(1-\frac{\alpha}{s})} \, .$$
Denoting by $ T^* $ the  maximal time of existence of $ u $ and letting $ T $ tends to $ T^* $, this contradicts proposition \eqref{prop2} whenever $ T^* $ is finite.
 This proves that the solutions are global in $ H^{\alpha}(\R^d) $. 
 
 \begin{remark}\label{normecrtique}
 Note that for $s=\alpha$, we have that $\|u\|_{L^{\frac{4\alpha}{d}+2}_TL^{\frac{4\alpha}{d}+2}}\lesssim 1 $ for any $ T>0 $ which show  that 
 $$
 \|u\|_{L^{\frac{4\alpha}{d}+2}(\R_+^*\times\R^d) } < +\infty \;
 $$
 In plus,
 $$\|u\|_{L^{\frac{4\alpha}{d}+2}(\R_+^*\times\R^d) } \longrightarrow 0, ~ \text{when}~ a \longrightarrow +\infty$$
 \end{remark}
\subsection{Proof Theorem \ref{theorem1}}\label{sect}
Now we will prove the global existence for small data, to do this we will use the following fractional Galgliardo-Niremberg inequalities (see \cite{Molinet}):
\begin{lemma}\label{Gagliardo}
 Let $q$, $r$ be  any real numbers satisfying $1\leq q$, $ r \leq \infty$, and $s$, $s_1$ be two reals numbers. If $u$ is any functions in
$C^{\infty}_{0}(\R^d)$, then
$$
\|D^s u\|_{L^p} \leq C \|D^{s_1} u\|^a_{L^r} \|u\|_{L^q}^{1-a}
$$
where 
$$
\frac{1}{p} = \frac{s}{d} + a(\frac{1}{r}-\frac{s_1}{d}) + (1-a) \frac{1}{q},
$$
for all $a$ in the interval
$$
\frac{s}{s_1} \leq a \leq 1,
$$
where $C$ is a constant depending only on $ d$, $s$, $s_1$,$q$,$r$ and $a$.
\end{lemma}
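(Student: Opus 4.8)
The plan is to exploit the fact that the inequality is scale invariant: one checks directly that the displayed relation between $1/p$, $s$, $s_1$, $a$ and the exponents $q,r$ is precisely the requirement that both sides transform identically under the rescaling $u(x)\mapsto u(\lambda x)$. Consequently no genuine information is lost by localizing in frequency, and I would establish the estimate through a Littlewood--Paley decomposition together with Bernstein-type inequalities, summing the resulting dyadic series after optimizing the position of a single frequency cutoff. This is the standard route for homogeneous Gagliardo--Nirenberg estimates and adapts verbatim to the fractional operators $D^s=(-\Delta)^{s/2}$.

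Concretely, write $u=\sum_{j\in\Z}\Delta_j u$, where $\Delta_j$ projects onto the dyadic frequency annulus $|\xi|\sim 2^j$, so that $\|D^s\Delta_j u\|_{L^p}\simeq 2^{js}\|\Delta_j u\|_{L^p}$. I would then bound each block in two complementary ways. On the one hand, Bernstein's inequality ($L^q\to L^p$) gives
$$
\|D^s\Delta_j u\|_{L^p}\lesssim 2^{j[s+d(1/q-1/p)]}\,\|u\|_{L^q},
$$
which is favorable at low frequencies; on the other hand, factoring out $D^{s_1}$ and applying Bernstein ($L^r\to L^p$) gives
$$
\|D^s\Delta_j u\|_{L^p}\lesssim 2^{j[s-s_1+d(1/r-1/p)]}\,\|D^{s_1}u\|_{L^r},
$$
which is favorable at high frequencies. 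Setting $A=s+d(1/q-1/p)$ and $B=s-s_1+d(1/r-1/p)$, summing the first bound over $j<J$ and the second over $j\ge J$, and then choosing $2^J\simeq(\|D^{s_1}u\|_{L^r}/\|u\|_{L^q})^{1/(A-B)}$ to balance the two contributions, I would obtain
$$
\|D^s u\|_{L^p}\lesssim \|D^{s_1}u\|_{L^r}^{A/(A-B)}\,\|u\|_{L^q}^{-B/(A-B)}.
$$
A short computation shows $A-B=s_1+d(1/q-1/r)$, so that $A/(A-B)$ equals the exponent $a$ prescribed by the scaling relation and $-B/(A-B)=1-a$; this is exactly where the hypothesis on $1/p$ enters.

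The main obstacle is the convergence of the two geometric series, which requires $A>0$ and $B<0$; these are precisely the strict forms of the constraints $s/s_1<a<1$. The borderline cases must be treated separately: at $a=1$ (equivalently $B=0$) the statement reduces to the pure Sobolev embedding $\|D^s u\|_{L^p}\lesssim\|D^{s_1}u\|_{L^r}$, which I would obtain from the Hardy--Littlewood--Sobolev inequality (or the Mikhlin multiplier theorem) rather than from the summation argument, and the endpoint $a=s/s_1$ is handled analogously. Secondary technical points are the ordering requirement $q,r\le p$ needed for the two applications of Bernstein, the justification of these inequalities for the fractional operators $D^s$ and $D^{s_1}$ via their kernel representations, and the harmless treatment of the zero frequency mode, which is legitimate since the estimate is stated for $u\in C_0^\infty(\R^d)$ and the norms involved are homogeneous.
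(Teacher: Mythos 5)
The paper offers no proof of this lemma at all: it is quoted (with some typographical slips) from the cited reference of Hajaiej--Molinet--Ozawa--Wang, so there is no internal argument to compare yours against. Your Littlewood--Paley/Bernstein scheme is the standard proof of the homogeneous fractional Gagliardo--Nirenberg inequality and is in fact the method of that reference; the dyadic bookkeeping checks out, since with $K:=A-B=s_1+d(1/q-1/r)$ the scaling relation gives $A=aK$ and $B=(a-1)K$, so the optimized exponents $A/(A-B)$ and $-B/(A-B)$ are exactly $a$ and $1-a$. One imprecision worth flagging: the convergence conditions $A>0$ and $B<0$ for the two geometric series are equivalent to $0<a<1$ together with $K>0$, \emph{not} to $s/s_1<a<1$ as you assert. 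The lower bound $a\ge s/s_1$ plays a different role: it is a genuinely necessary condition (test the inequality on a unimodular bump $\phi(x)e^{iNx_1}$ and let $N\to\infty$; the two sides scale like $N^{s}$ and $N^{as_1}$), and in the dyadic argument it is tied to the admissibility of the exponent $p$ produced by the scaling relation and to the ordering $q,r\le p$ needed for your two Bernstein steps, which you should verify rather than list as a ``secondary technical point.'' Apart from relabeling which hypothesis is responsible for what, the sketch is sound, and the endpoint cases $a=1$ and $a=s/s_1$ are correctly deferred to Sobolev-type embeddings.
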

Now we have the following one:
\begin{proposition}\label{propenergie}
Let  $ \alpha > 0$ and $u \in H^{\alpha}(\mathbb{R}^{d})$. Then there exists $C = C(d,\alpha)$ such that:
$$
\|(-\Delta)^{\frac{\alpha}{2}}u\|^2\big(\frac{1}{2}-C\|u\|_{L^2}^{\frac{4\alpha}{d}}\big) \leq E(u(t)) 
$$
\end{proposition}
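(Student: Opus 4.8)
The plan is to control the nonlinear (potential) term $\frac{d}{4\alpha+2d}\|u\|_{L^{\frac{4\alpha}{d}+2}}^{\frac{4\alpha}{d}+2}$ appearing in the definition of $E(u)$ by the product of the dispersive term $\|(-\Delta)^{\frac{\alpha}{2}}u\|_{L^2}^2$ with a suitable power of the mass, using the fractional Gagliardo--Nirenberg inequality of Lemma \ref{Gagliardo}. Once such a bound is in hand, the stated inequality follows at once: it is obtained by inserting the bound into $E(u)$ and factoring out $\|(-\Delta)^{\frac{\alpha}{2}}u\|_{L^2}^2$.

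First I would apply Lemma \ref{Gagliardo} with the choices $s=0$, $s_1=\alpha$, $r=q=2$ and target exponent $p=\frac{4\alpha}{d}+2$, recalling the identification $D^\alpha u=(-\Delta)^{\frac{\alpha}{2}}u$. The constraint $\frac{1}{p}=a\bigl(\frac{1}{2}-\frac{\alpha}{d}\bigr)+(1-a)\frac{1}{2}=\frac{1}{2}-\frac{a\alpha}{d}$ determines the interpolation parameter; solving it gives $a=\frac{d}{2\alpha+d}$, which lies in $(0,1)$ for every $\alpha>0$ and $d\geq 2$ and hence satisfies the admissibility range $0=\frac{s}{s_1}\leq a\leq 1$. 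The lemma then yields
$$
\|u\|_{L^{\frac{4\alpha}{d}+2}} \leq C_0\,\|(-\Delta)^{\frac{\alpha}{2}}u\|_{L^2}^{a}\,\|u\|_{L^2}^{1-a}.
$$

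Next I would raise this to the power $p=\frac{4\alpha}{d}+2$ and compute the resulting exponents. A direct computation gives $ap=2$ and $(1-a)p=\frac{4\alpha}{d}$, so that
$$
\|u\|_{L^{\frac{4\alpha}{d}+2}}^{\frac{4\alpha}{d}+2} \leq C_0^{\,p}\,\|(-\Delta)^{\frac{\alpha}{2}}u\|_{L^2}^{2}\,\|u\|_{L^2}^{\frac{4\alpha}{d}}.
$$
Multiplying by $\frac{d}{4\alpha+2d}$ and setting $C=\frac{d}{4\alpha+2d}C_0^{\,p}$ gives the desired control on the potential term. Substituting into the definition of the energy then produces
$$
E(u) \geq \tfrac{1}{2}\|(-\Delta)^{\frac{\alpha}{2}}u\|_{L^2}^{2} - C\,\|(-\Delta)^{\frac{\alpha}{2}}u\|_{L^2}^{2}\,\|u\|_{L^2}^{\frac{4\alpha}{d}} = \|(-\Delta)^{\frac{\alpha}{2}}u\|_{L^2}^{2}\Bigl(\tfrac{1}{2}-C\|u\|_{L^2}^{\frac{4\alpha}{d}}\Bigr),
$$
which is exactly the claimed inequality.

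The only point requiring care is the bookkeeping of exponents in the Gagliardo--Nirenberg step, in particular checking that the homogeneity works out so that the gradient power is exactly $2$ and the mass power is exactly $\frac{4\alpha}{d}$; this is precisely the manifestation of the $L^2$-critical scaling $p=1+\frac{4\alpha}{d}$. I expect this to be a routine algebraic verification rather than a genuine obstacle: the entire content of the proposition is the sharp critical Gagliardo--Nirenberg inequality, and no estimate beyond Lemma \ref{Gagliardo} is needed.
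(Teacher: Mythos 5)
Your proof is correct and follows essentially the same route as the paper: both reduce the proposition to the $L^2$-critical fractional Gagliardo--Nirenberg inequality $\|u\|_{L^{\frac{4\alpha}{d}+2}}^{\frac{4\alpha}{d}+2}\lesssim \|(-\Delta)^{\frac{\alpha}{2}}u\|_{L^2}^{2}\|u\|_{L^2}^{\frac{4\alpha}{d}}$ and then factor the energy. The only difference is that you explicitly verify the interpolation exponents from Lemma \ref{Gagliardo} (finding $a=\frac{d}{2\alpha+d}$, $ap=2$, $(1-a)p=\frac{4\alpha}{d}$), whereas the paper simply quotes the resulting inequality; this is a harmless and in fact welcome addition.
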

\textbf{Proof of proposition \ref{propenergie}:}\\
We have that :
$$
E(u(t)) = \frac{1}{2}\|(-\Delta)^{\frac{\alpha}{2}}u\|^2_{L^2} - \frac{d}{4\alpha + 2d} \|u\|_{L^{\frac{4\alpha}{d}+2}}^{\frac{4\alpha}{d}+2}
$$
But by the fractional Gagliardo-Niremberq inequality there exists $A=A(\alpha,d)$ such that:\\

$$
\displaystyle{\int |u|^{\frac{4\alpha}{d}+2}dx \leq A (\int|(-\Delta)^{\frac{\alpha}{2}}u|^2dx)(\int|u|^2dx)^{\frac{2\alpha}{d}}}
$$
then 
\begin{align}
E(u(t)) &\geq \frac{1}{2}\|(-\Delta)^{\frac{\alpha}{2}}u\|^2_{L^2} - A\frac{d}{4\alpha + 2d}(\int|(-\Delta)^{\frac{\alpha}{2}}u|^2dx)(\int|u|^2dx)^{\frac{2\alpha}{d}}\nonumber\\
& = \|(-\Delta)^{\frac{\alpha}{2}}u\|^2_{L^2}\bigg(\frac{1}{2} - A\frac{d}{4\alpha + 2d}(\int|u|^2dx)^{\frac{2\alpha}{d}}\bigg).\nonumber
\end{align}
Now let us return to the proof of theorem \ref{theorem1}:\\
Let  $ u\in C([0,T]; H^\infty(\R^d)) $ be  a solution to \eqref{NLSasalpha} emanating from $u_0\in H^\infty(\R^d) $. 
Then it holds 
$$\frac{d}{dt}E(u(t)) =  \displaystyle{-a\int |(-\Delta)^{\frac{s+\alpha}{2}}u(t)|^2 +a Im \int \big((-\Delta)^s u(t)\big)\,  |u(t)|^{p-1}\overline{u(t)}}$$
and H\"older inequalities in physical space and in Fourier space lead to
$$
\Bigl| \int ((-\Delta)^s u) \, |u|^{p+1} \Bigr| \leq \|(-\Delta)^s u \|_{L^2}\|u\|_{L^{2p+2}}^{p+1}
$$
with
$$
\|(-\Delta)^s u \|_{L^2} \leq \|(-\Delta)^{\frac{s+\alpha}{2}}u\|_{L^2}^{\frac{2s}{s+\alpha}}\|u\|_{L^2}^{\frac{\alpha-s}{\alpha+s}}\, .
$$
Using Gagliardo-Nieremberg inequality, we obtain $$
\|u\|_{L^{\frac{8\alpha}{d}}+2}^{\frac{8\alpha}{d} + 2} \leq C_d^{\frac{8 \alpha}{d}+2} \|\nabla u\|_{L^2}^{4\alpha}\|u\|^{{\frac{8\alpha}{d} + 2}-4\alpha}_{L^2}\, .
$$
 This estimate together with Cauchy-Schwarz inequality (in Fourier space)
 $$
\|\nabla u\|_{L^2} \leq \|(-\Delta)^{\frac{s+\alpha}{2}} u \|_{L^2}^{\frac{1}{s+\alpha}}\|u\|_{L^2}^{\frac{(s+\alpha-1)}{s+\alpha}}
$$

lead to 
$$
\|u\|_{L^{\frac{8\alpha}{d}}+2}^{\frac{4\alpha}{d} + 1} \leq C_d^{\frac{4\alpha}{d} + 1}\|(-\Delta)^{\frac{s+\alpha}{2}} u \|_{L^2}^{\frac{2\alpha}{s+1}}\|u\|_{L^2}^{\frac{2s}{s+1}}\| u\|_{L^2}^{\frac{4\alpha}{d}+1 -\frac{2\alpha}{s+\alpha}}\, .
$$
Combining the above estimates we eventually obtain
$$
\frac{d}{dt}E(u(t)) \leq a \|(-\Delta)^{\frac{s+\alpha}{2}}u\|_{L^2}^2(C_d^{\frac{4\alpha}{d} + 1}\|u\|_{L^2}^{4\alpha/d} - 1)
$$
which together with \eqref{ff} implies that $ E(u(t)) $ is not increasing for $ \|u_0\|_{L^2} \le\frac{1}{C_d^{1+\frac{d}{4}}} $ implies $E(u(t)) \leq E(u_0)$ for all $ t \geq 0$. Now with proposition \ref{propenergie} in the hand we obtain that $\|(-\Delta u)^{\frac{\alpha}{2}}\|_{L^2} \lesssim E(u_0)$, for $\|u_0\|_{L^2}$ small enough.This finishes the proof. \\

\textbf{Proof of theorem \ref{third theorem}:} The second part of this theorem was proved previously (see remark \ref{normecrtique}). Let us prove the scattering:\\
Let $v(t):=S_{-a,-\alpha,-s}(t)u(t)$ then
\begin{equation}
 v(t) = u_{0} + i\int_{0}^{t}S_{a,\alpha,s}(s)(|u|^{\frac{4\alpha}{d}}u(s))ds. \nonumber
\end{equation} 
Therefore for $0 < t <\tau$,
$$ 
v(t)-v(\tau) = i\int_{\tau}^{t}S_{a,\alpha,s}(-t^{\prime})\big(|u|^{\frac{4\alpha}{d}}u\big)dt^{\prime}.
$$
It follows from Strichartz's estimates (as previously) that:
$$
\|v(t)-v(\tau)\|_{L^2} = \|i\int_{\tau}^{t}S_{a,\alpha,s}(-t^{\prime})\big(|u|^{\frac{4\alpha}{d}}u\big)\|_ {L^2}\leq \|u\|^{\frac{4\alpha}{d}+1}_{L^{\frac{4\alpha}{d} +2}(t,\tau) L^{\frac{4\alpha}{d} +2}}$$
But  by remark (\ref{normecrtique}), for $s=\alpha$ we have that $u\in L^{\frac{4\alpha}{d}+2} ((0,\infty),L^{\frac{4\alpha}{d}+2})$, then
the right hand side goes to zero when $t, \tau \longrightarrow + \infty$. The scattering follows from the Cauchy criterion.\\


\begin{thebibliography}{10}
\bibitem{Sparber}
P. Antonelli and C. Sparber.\newblock{ \it Global well-posedness for cubic NLS with nonlinear damping}.
\newblock{ Comm. Partial Differential Equations}, 35 (2010) 4832-4845.

\bibitem{Sparber2}
P. Antonelli, R. Carles and C. Sparber.\newblock{ \it 
  On nonlinear Schrodinger-type equations with nonlinear damping}.
  \newblock{ Int. Math. Res. Not.}  (2015), no. 3, 740-762.
 
  \bibitem{Bardos}
C. Bardos, P. Penel, P. Frisch and P.L. Sulem.
\newblock{ \it 
Modified dissipativity for a nonlinear evolution equation arising in turbulence}.
\newblock{ Arch. Rat. Mech. Anal}, 71 (1979) 4237-256.


  \bibitem{Black}
D.T. Blackstock.
\newblock{\it Generalized Burgers equation for plane waves}.
\newblock{ J. Acoust. Soc. Am.} 77 (1985), no. 3, 2050-2053.

  
\bibitem{Cazenave1}
T.~Cazenave.
\newblock {\it Semilinear {S}chr\"odinger equations.} volume~10 of {\em Courant
  Lecture Notes in Mathematics}.
\newblock New York University Courant Institute of Mathematical Sciences, New
  York, 2003.
  \bibitem{Cazenave2}
\newblock T. Cazenave and F. Weissler,
\newblock {\it Some remarks on the nonlinear Schr\"odinger equation in the subcritical case,}
\newblock in New Methods and Results in Nonlinear Field Equations (Bielefeld, 1987), 59-69, Lecture Notes in Phys., \textbf{347}, Springer, Berlin, 1989.
\bibitem{CW2}
T.Cazenave and F.Weissler.
 \newblock{\it The Cauchy problem for the critical nonlinear Schr\"odinger equation.}
 \newblock Nonlinear Anal. 14 (1990), 807-836.
 
  \bibitem{David}
D. Cai, A. J. Majda, D.W.McLaughlin, and E.G.Tabak. {\it Dispersive wave turbulence
in one dimension.} \newblock{ Phys. D 152/153 (2001), 551-572, Advances in nonlinear mathematics
and science. MR 1837929 (2002k:76069)}
\bibitem{Chen}
W. Chen and S. Holm.
 \newblock{\it Fractional Laplacian time-space models for linear and nonlinear lossy media exhibiting arbitrary frequency power-law dependency.}
 \newblock J. Acoust. Soc. Am. 115 (2004), no. 4, 1424-1430.
 \bibitem{Hichem}
Y. Cho, H. Hajaiej, G. Hwang, and T. Ozawa. \newblock{\it On the Cauchy problem of
fractional Schr\"odinger equation with Hartree type nonlinearity.} Funkcial. Ekvac. 56 (2013), no. 2,
193-224. MR 3114821
\bibitem{Cho}
Y. Cho, G. Hwang, S. Kwon, and S.Lee. \newblock{\it Profile decompositions and
blowup phenomena of mass critical fractional Schr\"odinger equations.} Nonlinear Anal. 86 (2013),
12-29. MR 3053553
 \bibitem{dodson1}B. Dodson.
\newblock {\it  Global well-posedness and scattering for the mass critical nonlinear Schr\"odinger equation with mass below the mass of the ground state.}
\newblock { Advances in Mathematics} 285 (2015), 1589-1618.

\bibitem{Gaul}
L. Gaul. 
\newblock{\it The  influence  of  damping  on  waves  and  vibrations.}
\newblock {Mech. Syst. Signal Process.} 13 (1999),   no. 1, 1-30.
 \bibitem{Guo1}
 B. Guo, and  D. Huang \newblock{\it Existence and stability of standing waves for nonlinear fractional Schr\"odinger equations.} \newblock{ J. Math.
Phys. 53, 083702 (2012).}
\bibitem{Guo2}
 B. Guo and Z. Huo.\newblock{\it Global well-posedness for the fractional nonlinear Schr\"odinger equation.} \newblock{Commun. Partial Differential Equations 36(2), 247-255 (2010).}
 \bibitem{Guo3}
 Z. Guo, Y. Sire, Y. Wang, and L. Zhao.\newblock{\it On the energy-critical fractional Schr\"odinger equation in the radial case.}\newblock{ e-print
arXiv:1310.6816v1 [math.AP] (2013).}
\bibitem{Guo4}
Z.Guo and Y. Wang. \newblock{\it Improved Strichartz estimates for a class of dispersive equations in the radial case and their applications
to nonlinear Schrödinger and wave equations.} \newblock{J. Anal. Math. 124(1), 1-38 (2014).}

\bibitem{Hanyga}
A. Hanyga.
\newblock{\it Multi-dimensional solutions of space-fractional diffusion equations}.
\newblock {Proc. R. Soc. London, Ser. A} 457 (2001), 2993-3005.
\bibitem{Molinet}
H.Hajaiej, L. Molinet, T.Ozawa and B. Wang. \newblock{\it Necessary and sufficient conditions for the fractional Gagliardo-Nirenberg
inequalities and applications to Navier-Stokes and generalized boson equations}. RIMS Kokyuroku Bessatsu B26, 159-199
(2011).
 \bibitem{Alexandru}
A. D. Ionescu and F.Pusateri.\newblock{\it Nonlinear fractional Schr\"odinger equations in one
dimension.} \newblock{J. Funct. Anal. 266 (2014), no. 1, 139-176. MR 3121725}129. MR 3054599
 \bibitem{sparber1}
 C.Klein, C.Sparber, and P.Markowich.\newblock{\it Numerical study of fractional nonlinear
Schr\"odinger equations.} Proc. R. Soc. Lond. Ser. A Math. Phys. Eng. Sci. 470 (2014), no. 2172,
20140364, 26. MR 3269035










\bibitem{Kenig}
C. E. Kenig, G. Ponce, and L. Vega.\newblock{\it Well-posedness and scattering results for the generalized
Korteweg-de-Vries equation via the contraction principle.} \newblock{Comm. Pure App. Math}, 46 (1993),  no. 4, 527--620.

\bibitem{krieger}
J. Krieger, E. Lenzmann, and P. Raphael.\newblock{\it Nondispersive solutions to the $L^2$-critical
half-wave equation.} Arch. Ration. Mech. Anal. 209 (2013), no. 1, 61-129.


\bibitem{Laskin1}
 N. Laskin.\newblock{\it Fractional Quantum Mechanics and L´evy Path Integrals.} Physics Letters 268 A
2000, 298-304.
\bibitem{Laskin2} N. Laskin.\newblock{\it Fractional Schrodinger equation.} Physical Review E66: 056108 2002.

\bibitem{Zhang}
C. Miao, B. Yuan and B. Zhang.
\newblock{\it Well-posedness of the Cauchy problem for the fractional power
dissipative equations.} \newblock{Nonlinear Analysis}, 68 (2008) 461-484



 \bibitem{Stroock}
 D. Stroock.
 \newblock {\it Diffusion processes associated with L\'evy generators}.
 \newblock {  Z. Wahrscheinlichkeitstheorie und Verw. Gebiete} 32 (1975), no. 3, 209-244.
 \bibitem{Saanouni}
 T. Saanouni. \newblock{\it Remarks on damped fractional Schr\"odinger equation with pure power
nonlinearity.} \newblock{Journal of Mathematical Physics 56, 061502 (2015); doi: 10.1063/1.4922114}

\bibitem{Weinstein1}
M.I. Weinstein. Existence and dynamic stability of solitary wave solutions of equations
arising in long wave propagation. Comm. Partial Differential Equations 12 (1987), no. 10, 1133-
1173.
\end{thebibliography}
\end{document}